\newcommand{\mS}{\mathcal{S}}
\newcommand{\mT}{\mathcal{T}}
\newcommand{\ot}{\otimes}
\newcommand{\ra}{\rightarrow}
\newcommand{\C}{\mathbb{C}}
\newcommand{\N}{\mathbb{N}}
\newcommand{\F}{\mathbb{F}}
\newcommand{\mC}{\mathcal{C}}
\newcommand{\D}{\mathcal{D}}
\newcommand{\mO}{\mathcal{O}}
\newcommand{\ep}{\varepsilon}
\newcommand{\mF}{\mathfrak{u}}
\newcommand{\ilim}{\displaystyle\lim_{\longrightarrow}}
\theoremstyle{thmit}
\newtheorem{theorem}{Theorem}[section]
\newtheorem{definition}[theorem]{Definition}
\newtheorem{proposition}[theorem]{Proposition}
\newtheorem{corollary}[theorem]{Corollary}
\numberwithin{equation}{section}
\theoremstyle{definition}
\newtheorem{example}[subsubsection]{Example}
\newtheorem{remarks}[theorem]{Remark}
\title[Inductive limit]{\textbf{Nuclearity properties and $C^*$-envelopes of operator system inductive limits}}
\date{\today}
\author[Preeti Luthra]{Preeti Luthra}
\address{Department of Mathematics\\ University of Delhi\\ Delhi-110007, INDIA}
\email{maths.preeti@gmail.com}
\author[Ajay Kumar]{Ajay Kumar}
\address{Department of Mathematics\\ University of Delhi\\ Delhi-110007, INDIA}
\email{akumar@maths.du.ac.in}
\keywords{Operator systems,
  $C^*$-envelopes, tensor products, nuclearity, inductive limit.\\ 
\noindent\textit{Mathematics Subject Classification (2010): Primary
  46L06, 46L07; Secondary 46L05, 47L25}}
\begin{document}
\maketitle
%=================================================%=================================================%=================================================
\begin{abstract}
We study the relationship between $C^*$-envelopes and inductive limit of operator systems. Various operator system nuclearity properties of inductive limit for a sequence of operator systems are also discussed.
\end{abstract}
%=================================================%=================================================%=================================================
\section{Introduction}
In last few years, the development of the theory of operator systems has seen a fair amount of attention. All the important notions from the theory of $C^*$-algebras including exactness, nuclearity, weak expectation property and lifting properties have been explicitly defined in the category of operator systems. Associated to every representation $\phi$ of an operator system  $\mS$ into $B(H)$, for some Hilbert Space $H$, there exist a $C^*$-cover the $C^*$-algebra $C^*(\phi(\mS))\subset B(H)$. The minimal $C^*$-cover among all such representation is known as the $C^*$-envelope of $\mS$. It is thus quite natural to ask which $C^*$-algebraic properties of the $C^*$-envelopes are carried over to the generating operator systems in terms of their definitions in the operator system category, and to what extent. Some attempts done in this direction can be found in \cite{opsysnuc, embopsys}. \\
%=================================================
It is well known (see \cite{blackadar2}) that for the category of $C^*$-algebras, inductive limit preserves many intrinsic properties, viz., exactness, nuclearity, simplicity etc. The analysis of inductive limit of ascending sequences of finite dimensional $C^*$-algebras, known as approximately finite dimensional (AF) $C^*$-algebras, has played an important role in theory of operator algebras. 
Existence of inductive limits in the category of operator systems has been shown in \cite{caralgebra}. But unlike in the category of $C^*$-algebras, there are several notion of nuclearity in the operator system category (see \cite{KPTT1, KPTT2, kavnuc}). It is thus natural to check if these nuclearity properties are preserved under operator system inductive limit. This paper deals with $C^*$-envelopes of operator system inductive limit and various nuclearity properties of operator system inductive limit.\\
%=================================================
After recalling all the prerequisite definitions and results including that of inductive limit for the categories of $C^*$-algebras and operator systems in \Cref{s:1} on Preliminaries, the commutativity of inductive limit and  $C^*$-envelopes has been studied in \Cref{s:2}. We do this for particular cases of direct sequences. In \Cref{s:3}, various nuclearity properties of inductive limit of operator systems are discussed.

%=================================================%=================================================%=================================================
\section{Preliminaries}\label{s:1}
%=================================================
\subsection{Operator systems}
The concept of operator systems and their tensor products is the familiar one now and most of the details can be seen in \cite{kavnuc, KPTT1, KPTT2}.\\
 Recall that a concrete operator system is a unital self-adjoint subspace of $B(H)$ for some Hilbert space $H$. A \textit{$C^*$-cover} (\cite[$\S 2$]{hamana2}) of an operator system
$\mS$ is a pair $(A, i)$ consisting of a unital $C^*$-algebra $A$ and
a complete order embedding $i : \mS \rightarrow A$ such that $i(A)$
generates the $C^*$-algebra $A$. The \textit{$C^*$-envelope} as defined by Hamana \cite{hamana2}, of an operator system $\mS$ is a
$C^*$-cover generated by $\mS$ in its
injective envelope $I(\mS)$ and is denoted by $C^*_e (\mS)$.  The $C^*$-envelope $C^*_e (\mS)$
enjoys the following universal ``minimality'' property (\cite[Corollary 4.2]{hamana2}):

{\em Identifying $\mS$ with its image in $C^*_e (\mS)$, for any $C^*$-cover
$(A, i)$ of $\mS$, there is a unique surjective unital
$*$-homomorphism $\pi : A \ra C^*_e (\mS)$ such that $\pi(i(s)) = s$
for every $s$ in $\mS$.}\\
%=================================================
One more
fundamental $C^*$-cover, the maximal one, is associated to an operator system $\mS$,
namely, the universal $C^*$-algebra $C^*_u(\mS)$ introduced by
Kirchberg and Wassermann (\cite[$\S 3$]{kirch}). $C^*_u(\mS)$ satisfies the following universal ``maximality'' property:\\
\emph{Every unital completely positive map $\phi: \mS \ra A$, where $A$ is a unital $C^*$-algebra extends uniquely to a unital $*$-homomorphism $\pi : C^*_u(\mS) \ra A$.}\\
%=================================================
In general, a complete order isomorphism from an operator system to another may not extend to an injective $*$-homorphism on their $C^*$-covers.
\begin{proposition}\label{extcover} For operator systems $\mS$ and $\mT$, let $\theta : \mS \ra \mT$ be a unital complete order isomorphism. Then
\begin{enumerate} 
\item[(i)]\cite[Proposition 9]{kirch} $\theta$ can be uniquely extended to a unital $*$-monomorphism $\overline{\theta} : C_u^*(\mS) \ra C_u^*(\mT)$, that is, $\overline{\theta} \circ i_{\mS}=i_{\mT} \circ \theta$; where  $i_{\mS}: \mS \ra C^*_u(\mS)$ and $i_{\mT}: \mT \ra C^*_u(\mT)$ denote the natural inclusion map.
\item[(ii)]\cite[Theorem 2.2.5]{Arveson} In case $\theta$ is surjective, then $\theta$ can be uniquely extended to a unital $*$-monomorphism $\overline{\theta} : C_e^*(\mS) \ra C_e^*(\mT)$, that is, $\overline{\theta} \circ i_{\mS}=i_{\mT} \circ \theta$; where  $i_{\mS}: \mS \ra C^*_e(\mS)$ and $i_{\mT}: \mT \ra C^*_e(\mT)$ denote the natural inclusion map.
\end{enumerate}
\end{proposition}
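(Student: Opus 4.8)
The plan is to derive each extension from the relevant universal property of the $C^*$-cover in question, and in both cases to obtain injectivity by constructing a one-sided inverse and then invoking the uniqueness clause of that universal property.

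For (i), I would first note that the canonical inclusion $i_\mT : \mT \ra C_u^*(\mT)$ is a unital complete order embedding, hence unital completely positive, and that $\theta$, being a unital complete order isomorphism onto its image, is itself unital completely positive; thus the composite $i_\mT \circ \theta : \mS \ra C_u^*(\mT)$ is a unital completely positive map into a unital $C^*$-algebra. The ``maximality'' property of $C_u^*(\mS)$ then furnishes a unique unital $*$-homomorphism $\overline\theta : C_u^*(\mS) \ra C_u^*(\mT)$ with $\overline\theta \circ i_\mS = i_\mT \circ \theta$; its uniqueness is exactly the uniqueness asserted there.

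The substance of (i) is the injectivity of $\overline\theta$, and this is where I expect the real work. I would fix a faithful unital representation $C_u^*(\mS) \subseteq \B$ and regard $\iota : C_u^*(\mS) \hookrightarrow \B$ as the (isometric, hence injective) inclusion. Since $\theta$ is an isomorphism onto its image, the inverse yields a unital completely positive map $i_\mS \circ \theta^{-1} : \theta(\mS) \ra C_u^*(\mS) \subseteq \B$ defined on the operator subsystem $\theta(\mS) \subseteq \mT$. Injectivity of $\B$ (Arveson's extension theorem) extends it to a unital completely positive $\psi : \mT \ra \B$, and the maximality property once more produces a unital $*$-homomorphism $\overline\psi : C_u^*(\mT) \ra \B$ with $\overline\psi \circ i_\mT = \psi$. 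A direct computation on $\mS$ gives $\overline\psi\big(\overline\theta(i_\mS(s))\big) = \overline\psi(i_\mT(\theta(s))) = \psi(\theta(s)) = i_\mS(\theta^{-1}(\theta(s))) = i_\mS(s)$, so the $*$-homomorphism $\overline\psi \circ \overline\theta$ agrees with $\iota$ on $i_\mS(\mS)$. As $i_\mS(\mS)$ generates $C_u^*(\mS)$ as a $C^*$-algebra, two $*$-homomorphisms agreeing there agree everywhere, whence $\overline\psi \circ \overline\theta = \iota$ is injective and therefore so is $\overline\theta$.

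For (ii), with $\theta$ now a surjective unital complete order isomorphism, I would argue entirely through the ``minimality'' property of the $C^*$-envelope. The map $i_\mT \circ \theta : \mS \ra C_e^*(\mT)$ is a unital complete order embedding whose range $i_\mT(\theta(\mS)) = i_\mT(\mT)$ generates $C_e^*(\mT)$, so $(C_e^*(\mT), i_\mT \circ \theta)$ is a $C^*$-cover of $\mS$; running the same construction with $\theta^{-1}$ makes $(C_e^*(\mS), i_\mS \circ \theta^{-1})$ a $C^*$-cover of $\mT$. The minimality property then supplies surjective unital $*$-homomorphisms $\pi : C_e^*(\mT) \ra C_e^*(\mS)$ and $\overline\theta : C_e^*(\mS) \ra C_e^*(\mT)$ intertwining the inclusions, namely $\pi \circ i_\mT = i_\mS \circ \theta^{-1}$ and $\overline\theta \circ i_\mS = i_\mT \circ \theta$. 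Both composites $\pi \circ \overline\theta$ and $\overline\theta \circ \pi$ then fix $\mS$, respectively $\mT$, pointwise, so by the uniqueness clause they must be the respective identity maps; hence $\overline\theta$ is a unital $*$-isomorphism, in particular the desired $*$-monomorphism, and uniqueness follows since any extension is determined on the generating set $\mS$. The only points needing care are the bookkeeping of the intertwining relations and checking that the relevant ranges generate the $C^*$-covers; the genuine mathematical input is Arveson's extension theorem in (i) and the uniqueness halves of the two universal properties throughout.
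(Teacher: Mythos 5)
Your proposal is correct, and both halves hold up under scrutiny: in (i) the composition $\overline\psi \circ \overline\theta$ is a $*$-homomorphism agreeing with the faithful inclusion $\iota$ on $i_\mS(\mS)$, hence on the $C^*$-algebra it generates, which is all of $C_u^*(\mS)$; in (ii) the key observation that surjectivity of $\theta$ makes $(C_e^*(\mT), i_\mT\circ\theta)$ a $C^*$-cover of $\mS$ (and symmetrically with $\theta^{-1}$) is exactly what lets Hamana's minimality run in both directions, and the uniqueness clause forces the two composites to be identities. Note, however, that the paper does not prove this proposition at all: it is stated as a citation, part (i) to Kirchberg--Wassermann and part (ii) to Arveson. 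What you have written is essentially a reconstruction of those cited proofs --- Arveson's extension theorem plus the maximality property of $C_u^*$ for (i), and the rigidity/uniqueness half of the $C^*$-envelope's universal property for (ii) --- so your argument is the standard one rather than a genuinely new route; its value here is that it makes the paper's black-box citations self-contained. One small point worth making explicit: in (i) you should note that $\theta^{-1}$ is only defined on the operator subsystem $\theta(\mS) \subseteq \mT$, which is fine precisely because Arveson extension applies to ucp maps defined on operator subsystems, and this is where injectivity of $B(\mathcal{H})$ is genuinely needed.
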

%=================================================
A lattice of tensor products of
operator systems admitting a natural partial order: 
 $$\min \leq \mathrm{e} \leq \mathrm{el} ,\mathrm{er} \leq  \mathrm{c} \leq \max,$$ were introduced in \cite{KPTT1}. In \cite{disgrp}, a natural operator system
tensor product ``$\mathrm{ess}$'' arising from the enveloping $C^*$-algebras,
viz., $\mathrm{\mS \ot_{ess} \mT \subseteq C^*_e(\mS) \ot_{max} C^*_e(\mT)},$
was also defined. It is known from \cite[$\S 8$]{disgrp} that $\mathrm{ess} \leq \mathrm{c}$. See also \cite[Proposition 4.4]{opsysnuc} for comparison of $\mathrm{ess}$ with other operator system tensor products.
 %=================================================
Given two operator system tensor
products $\alpha$ and $\beta$, an operator system $\mS$ is said to be
\emph{$(\alpha, \beta)$-nuclear} if the identity map between $\mS
\ot_{\alpha} \mT$ and $\mS \ot_{\beta} \mT$ is a complete order
isomorphism for every operator system $\mT$, i.e. $$\mS \ot_{\alpha}
\mT = \mS \ot_{\beta} \mT.$$ Also, an operator system $\mS$ is said to
be \textit{$C^*$-nuclear}, if $$\mS \ot_{\mathrm{min}} A =\mS
\ot_{\mathrm{max}} A$$ for all unital $C^*$-algebras $A$.\\

%=================================================
Given an operator system $\mS$, a subspace $J\subseteq \mS$ is said to
be a \textit{kernel} (\cite[Definition $3.2$]{KPTT2}) if there exists
an operator system $\mT$ and a unital completely positive map $\phi :
\mS \ra \mT$ such that $J = \ker \phi$. For such a kernel $J \subset
\mS$, Kavruk et al. have shown that the quotient space $\mS/J$ forms an
operator system (\cite[Proposition $3.4$]{KPTT2}) with respect to the
natural involution, whose positive cones are given by 
\begin{eqnarray*}
  \mC_n &=& \mC_n(\mS/J)\\ &=& \{(s_{ij} + J)_{i,j} \in M_n(\mS/J):\;
  (s_{ij}) + \ep(1 + J)_n \in \D_n \;\text{, for every} \; \ep >
  0\},
  \end{eqnarray*} 
  where $$\D_n=\{(s_{ij}+J)_{i,j} \in M_n(\mS/J):
\;(s_{ij}+y_{ij})_{i,j} \in M_n(\mS)^+\; \text{, for some $y_{ij} \in
  J$}\}$$ and the Archimedean unit is the coset $1 + J$.\\
%=================================================
 An operator system $\mS$ is said to be 
 $\mathrm{exact}$ (\cite{KPTT2})  if 
for every unital $C^*$-algebra $A$ and a closed ideal $I$ in $A$ the induced map
$$(\mS \hat{\ot}_{\mathrm{min}} A)/(\mS \bar{\ot} I) \ra \mS \hat{\ot}_{\mathrm{min}}
(A/ I)$$ is a complete order isomorphism.
%=================================================
 Recall that an operator subsystem $\mS$ of a unital $C^*$-algebra $A$ is said \textit{to
contain enough unitaries of $A$} if the unitaries in $\mS$ generate $A$
as a $C^*$-algebra (\cite[$\S 9$]{KPTT2}).

The following characterizations established in \cite[$\S 5$]{KPTT2}, \cite[$\S 4$]{kavnuc}, \cite{opsysnuc} and \cite{embopsys} are used quite often:
\begin{theorem}\label{opsysenv}
\begin{enumerate}
\item[(i)] An operator system with exact $C^*$-envelope is exact. Converse is true if the operator system contains enough unitaries in its $C^*$-envelope. 
\item[(ii)] An operator system is $(\min, \mathrm{ess})$-nuclear if its
$C^*$-envelope is nuclear. Moreover, a unital $C^*$-algebra is $(\min,
\mathrm{ess})$-nuclear as an operator system if and only if it is
nuclear as a $C^*$-algebra.
\item[(iii)] Let $\mS \subset A$ contain enough unitaries of the unital
$C^*$-algebra $A$. Then $\mS$ is $(\min, \mathrm{ess})$-nuclear if and
only if $A$ is a nuclear $C^*$-algebra.
\item[(iv)] Suppose $\mS \subset A$ contains enough unitaries. Then, upto a
$\ast$-isomorphism fixing $\mS$, we have $A = C^*_e(\mS)$.
\end{enumerate}
\end{theorem}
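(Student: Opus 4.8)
The plan is to reduce all four assertions to a single mechanism: transferring exactness and nuclearity between an operator system $\mS$ and its $C^*$-envelope, using injectivity of $\min$ and the defining inclusion of $\mathrm{ess}$ for the "forward'' directions, and the identification $A=C^*_e(\mS)$ for the converses. Since part (iv) is what makes the converse halves of (i) and (iii) meaningful, I would prove it first and then bootstrap.

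For (iv) the plan is to show that the Shilov boundary ideal vanishes. Viewing $\mS\subseteq A$ as the $C^*$-cover $(A,\mathrm{id})$, Hamana's minimality property \cite{hamana2} gives a surjective unital $*$-homomorphism $\pi: A \ra C^*_e(\mS)$ that fixes $\mS$ and is completely isometric on it, with $C^*_e(\mS)=A/\mJ$ where $\mJ=\ker\pi$; the task is to prove $\mJ=0$. Two facts are available: $\mJ\cap\mS=\ker(\pi|_{\mS})=\{0\}$, and the unitaries of $\mS$ generate $A$. First I would note that for a unitary $u\in\mS$ the element $\pi(u)$ satisfies $\pi(u)^*\pi(u)=\pi(u)\pi(u)^*=\pi(1)=1$, so $\pi(u)$ is again unitary and $\pi$ restricts to a $*$-homomorphism on $C^*(u)$; the key step is to show $\pi$ is \emph{faithful} on each such $C^*(u)$, i.e. that the complete order data preserved by $\pi|_{\mS}$ pins down $\mathrm{sp}(u)$. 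Granting this, faithfulness must be propagated from the generating family $\{C^*(u)\}$ to all of $A$, whence $\mJ=0$ and $A\cong C^*_e(\mS)$ fixing $\mS$. I expect this propagation — converting injectivity on a generating set of subalgebras into global injectivity — to be the principal obstacle, and it is exactly where the ideal-theoretic content of "enough unitaries'' (a nonzero ideal cannot be disjoint from both $\mS$ and the generating unitaries) must enter.

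The forward halves of (i) and (ii) are comparatively soft. For (i), since $\min$ is injective we get complete order embeddings $\mS\,\oop_{\min}A\hookrightarrow C^*_e(\mS)\ot_{\min}A$ for every unital $C^*$-algebra $A$ and ideal $I$; exactness of the $C^*$-algebra $C^*_e(\mS)$ makes $0\ra C^*_e(\mS)\ot_{\min}I\ra C^*_e(\mS)\ot_{\min}A\ra C^*_e(\mS)\ot_{\min}(A/I)\ra 0$ exact, and intersecting each term with the corresponding $\mS$-tensor piece (identifying $\mS\,\bar{\ot}\,I$ with $(\mS\,\oop_{\min}A)\cap(C^*_e(\mS)\ot_{\min}I)$) should show that the induced quotient map for $\mS$ is a complete order isomorphism, i.e. $\mS$ is exact. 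For (ii), by definition $\mS\ot_{\mathrm{ess}}\mT\subseteq C^*_e(\mS)\ot_{\max}C^*_e(\mT)$; if $C^*_e(\mS)$ is nuclear then $\ot_{\max}=\ot_{\min}$ at the $C^*$-level, while injectivity of $\min$ gives $\mS\ot_{\min}\mT\subseteq C^*_e(\mS)\ot_{\min}C^*_e(\mT)$ with the same concrete matrix cones; comparing the two inclusions yields $\mS\ot_{\mathrm{ess}}\mT=\mS\ot_{\min}\mT$ for every $\mT$. The second sentence of (ii) follows by letting $\mT$ range over $C^*$-algebras and using $C^*_e(A)=A$ for a $C^*$-algebra $A$, which reduces $(\min,\mathrm{ess})$-nuclearity to $\min=\max$, that is, to $C^*$-nuclearity.

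Finally, (iii) is assembled from the other parts: by (iv) we may take $A=C^*_e(\mS)$, so "$A$ nuclear $\Rightarrow\mS$ is $(\min,\mathrm{ess})$-nuclear'' is exactly the first assertion of (ii). The reverse implication, like the converse in (i), is where "enough unitaries'' is indispensable: one must push an operator-system property (here $(\min,\mathrm{ess})$-nuclearity of $\mS$) up to a $C^*$-property (nuclearity of $C^*_e(\mS)=A$). My strategy would be to use that the unitaries of $\mS$ generate $A$ to promote the completely positive factorization furnished by operator-system nuclearity to a genuine $C^*$-algebraic completely positive approximation of $A$. I anticipate this promotion — together with the global-injectivity step of (iv) — to be the main difficulty of the theorem, the remaining implications being formal consequences of injectivity of $\min$ and the definition of $\mathrm{ess}$.
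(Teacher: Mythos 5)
A preliminary remark: the paper itself gives no proof of this theorem --- it is a compilation of results quoted from \cite[$\S 5$, $\S 9$]{KPTT2}, \cite[$\S 4$]{kavnuc}, \cite{opsysnuc} and \cite{embopsys} --- so your proposal must be measured against the standard arguments in those references. The soft halves of your proposal are essentially those arguments and are fine: the forward direction of (i) (exactness is inherited by the operator subsystem $\mS\subseteq C^*_e(\mS)$), the forward direction of (ii) (the defining inclusion of $\mathrm{ess}$, nuclearity of $C^*_e(\mS)$, and injectivity of $\min$), and the reduction of the $C^*$-algebra case of (ii) to $C^*$-nuclearity. The genuine gaps are exactly the steps you defer: (iv) and the converses of (i) and (iii).

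For (iv), the mechanism you propose cannot be repaired. Your spectrum-pinning step is itself correct (for a unitary $u$, $\mathrm{sp}(u)$ is the set of extreme points of $\overline{\{f(u):f \text{ a state}\}}=\mathrm{conv}\,\mathrm{sp}(u)$, and states of $\mS$ extend to states of any $C^*$-cover, so $\pi$ is faithful on each $C^*(u)$), but faithfulness on a generating family of singly generated subalgebras does \emph{not} propagate to faithfulness on $A$: the restriction map $C(\mathbb{T}^2)\ra C(\mathbb{T})$ onto the diagonal is faithful on $C^*(z)$ and on $C^*(w)$, which together generate $C(\mathbb{T}^2)$, yet it has enormous kernel. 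Nor is your parenthetical remedy true: a nonzero ideal \emph{can} be disjoint from an operator system containing enough unitaries --- $\ker\bigl(C^*(\mathbb{F}_2)\ra C^*_r(\mathbb{F}_2)\bigr)$ is a nonzero ideal meeting $\mathrm{span}\{1,g_1,g_2,g_1^*,g_2^*\}$ only in $\{0\}$. The actual proof uses neither idea: extend the UCP inverse $(\pi|_{\mS})^{-1}:\mS\ra A\subseteq B(H)$ to a UCP map $\gamma:C^*_e(\mS)\ra B(H)$ by Arveson's extension theorem; for each unitary $u\in\mS$ the element $(\gamma\circ\pi)(u)=u$ is unitary, so by Choi's multiplicative domain theorem $u$ lies in the multiplicative domain of $\gamma\circ\pi$; these unitaries generate $A$, so $\gamma\circ\pi$ is a $*$-homomorphism agreeing with $\mathrm{id}_A$ on a generating set, whence $\gamma\circ\pi=\mathrm{id}_A$ and $\pi$ is injective. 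Note that this exploits complete positivity of the inverse on all of $\mS$ at once, which is precisely what your subalgebra-by-subalgebra scheme discards. The converses of (i) and (iii) are likewise left open by your proposal, and the strategy you sketch for them is off track: $(\min,\mathrm{ess})$-nuclearity furnishes no factorization through matrix algebras, so there is nothing to ``promote'' to a completely positive approximation of $A$. The proofs in the cited literature instead bootstrap from (iv): for (iii), if $\mS$ is $(\min,\mathrm{ess})$-nuclear then for every unital $C^*$-algebra $B$ the identity map gives a complete order embedding $\mS\ot_{\min}B\subseteq A\ot_{C^*\text{-}\max}B$; the unitaries $u\ot v$ of $\mS\ot_{\min}B$ generate both $A\ot_{C^*\text{-}\max}B$ and $A\ot_{C^*\text{-}\min}B$, so part (iv) identifies each of these $C^*$-algebras with $C^*_e(\mS\hat{\ot}_{\min}B)$ by isomorphisms fixing $\mS\ot B$, forcing $A\ot_{C^*\text{-}\max}B=A\ot_{C^*\text{-}\min}B$ for every $B$, i.e.\ nuclearity of $A$; a parallel enough-unitaries, $C^*$-cover-uniqueness argument handles the converse of (i). In short: what you prove is the easy half, and what you flag as ``the principal obstacle'' is the theorem, with the two hints you offer for overcoming it both refuted by counterexamples.
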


%=================================================%=================================================
\subsection{Inductive limits}
For convenience, we recall some fundamental definitions and facts about 
inductive limits in the general context of a category \cite{rordamk}. \\
An inductive sequence in a category $C$ is a sequence $\{X_n\}_{n=1}^{\infty}$ of objects in $C$ and a sequence $\phi_n: X_n \ra X_{n+1}$ 
of morphisms in $C$,
$$\begin{CD}
{X_1} @>{\phi_1}>>   {X_2}  @>{\phi_2}>> {X_3} @>{\phi_3}>> \cdots 
\end{CD}$$
For $m > n$, the composed morphisms 
$$\phi_{m,n} = \phi_{m-1} \circ \phi_{m-2} \circ \cdots \circ \phi_n: X_n \ra X_m $$
which, together with the morphism $\phi_n$ , are called the \emph{connecting morphisms}. Conventionally, $\phi_{n,n}= Id_{X_n}$ and $\phi_{m,n}= 0$ when $m <n.$ \\An \emph{inductive limit} of the inductive 
sequence $\{X_n,\phi_n\}_{n=1}^{\infty}$
in a category $C$ is a system 
$\{X,\mu_n\}_{n=1}^{\infty}$, where $X$ is an object in $C$ and $\mu_n: X_n \ra X$ is a morphism in $C$ for each $n \in \mathbb{N}$ satisfying the following conditions:
\begin{enumerate}
\item[(i)] The diagram 
\begin{tikzcd}[column sep=small]
 X_n \arrow{dr}[swap]{\mu_n} \arrow{rr}{\phi_n} & & X_{n+1} \arrow{dl}{\mu_{n+1}} \\& X &
\end{tikzcd} commutes for each $n$.

\item[(ii)] If $(Y,\{\lambda_n\}_{n=1}^{\infty})$ is a system, where $Y$ is an object in $C$, $\lambda_n: X_n \ra Y$ is 
a morphism in $С$ for each $n \in \mathbb{N}$, and $\lambda_n = \lambda_{n+1}\circ \phi_n$ for all $n \in \N$, then there exist a unique $\lambda: X\ra Y$ such that  
\begin{tikzcd}[column sep=small]
 & X_n \arrow{dr}{\lambda_n} \arrow{dl}[swap]{\mu_{n}} &\\ X  \arrow{rr}{\lambda}  & & Y
\end{tikzcd} commutes for each $n$.

\end{enumerate} 
The inductive limit of the sequence is denoted by $\ilim(X_n,\phi_n)$, or 
more briefly by $\displaystyle\lim_{\longrightarrow}X_n$. 

\vspace{3mm}
Inductive limits do not exist in all categories (see \cite[Exercise $6.4$]{rordamk}). 
We deal with the inductive limits only in the categories of $C^*$-algebras and operator systems.

%=================================================%=================================================
\subsubsection{Inductive limits in the category of $C^*$-algebras}
\cite[Proposition $6.2.4$]{rordamk} 

Every inductive sequence $\{A_n\}_{n=1}^{\infty}$ of the $C^*$-algebras with $\phi_n : A_n \ra A_{n+1}$, $n = 1, 2, 3, \cdots$ the connecting $*$-homomorphisms has an inductive limit satisfying \begin{enumerate}
\item[(i)] $A=\overline{\bigcup_{n=1}^{\infty}\mu_n( A_n)};$
\item[(ii)] $\|\mu_n(a)\|=\lim_{m \ra \infty}\|\phi_{m,n}(a)\|$ for all $n \in \N$ and $a \in A.$
\item[(iii)] If $(B,\{\lambda_n\}_{n=1}^{\infty})$ is an inductive system with $\lambda : A \ra B$ as in (ii) above in the definition of inductive limit, then $\lambda$ is injective if and only if $Ker(\lambda_n) \subseteq Ker(\mu_n)$ for all $n \in \N$ and is surjective if and only if $B=\overline{\bigcup_{n=1}^{\infty}\lambda_n( B_n)}$. 

\end{enumerate}

%=================================================
The class of $C^*$-algebras representable as the
inductive limits of sequences of finite-dimensional $C^*$-algebras is known as $AF$-algebras.
Following holds for the inductive limit of $C^*$-algebras:
\begin{theorem}\label{Cindlim}
\begin{enumerate}
\item[(i)]\cite[II.8.2.5]{blackadar2}  The class of simple $C^*$-algebras is closed under inductive limits.
\item[(ii)]  The class of nuclear $C^*$-algebras is closed under inductive limits.
\item[(iii)]\cite[IV.3.4.4]{blackadar2} An inductive limit (with injective connecting maps)
of exact $C^*$-algebras is exact.
\item[(iv)]\cite[II.8.3.24]{blackadar2} Any countable inductive limit of AF algebras is an AF algebra.
\end{enumerate} 
\end{theorem}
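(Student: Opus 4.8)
Statements (i), (iii), and (iv) are recorded here only for later use and are quoted from \cite{blackadar2}; the one assertion requiring an argument is (ii). The plan is to reduce an arbitrary inductive limit of nuclear $C^*$-algebras to an increasing union of nuclear $C^*$-subalgebras and then to verify nuclearity through the completely positive approximation property. Write $A=\ilim(A_n,\phi_n)$ with each $A_n$ nuclear, and let $\mu_n:A_n\ra A$ be the canonical morphisms. By \cite[Proposition $6.2.4$]{rordamk} we have $A=\overline{\bigcup_{n=1}^{\infty}\mu_n(A_n)}$. Each $B_n:=\mu_n(A_n)$ is the image of the nuclear algebra $A_n$ under the $*$-homomorphism $\mu_n$, hence is a quotient $A_n/\ker\mu_n$ and therefore nuclear, since nuclearity passes to quotients. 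Moreover $\mu_n=\mu_{n+1}\circ\phi_n$ forces $B_n\subseteq B_{n+1}$, so $A$ is the norm closure of an increasing sequence of nuclear $C^*$-subalgebras.

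For the second step I would invoke the completely positive approximation property: $A$ is nuclear if and only if for every finite set $F\subseteq A$ and every $\ep>0$ there exist completely positive contractions $S:A\ra M_k(\C)$ and $T:M_k(\C)\ra A$ with $\|T\circ S(a)-a\|<\ep$ for all $a\in F$. Given such $F$ and $\ep$, density of $\bigcup_n B_n$ yields an index $n$ and elements $b_a\in B_n$ with $\|b_a-a\|<\ep$ for every $a\in F$. Applying the nuclearity of $B_n$ to the finite set $\{b_a:a\in F\}$ produces completely positive contractions $S_0:B_n\ra M_k(\C)$ and $T_0:M_k(\C)\ra B_n$ with $\|T_0\circ S_0(b_a)-b_a\|<\ep$. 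Since $M_k(\C)$ is injective, Arveson's extension theorem extends $S_0$, regarding $B_n$ as an operator subsystem of $A$, to a completely positive contraction $S:A\ra M_k(\C)$ with $S|_{B_n}=S_0$; I then set $T:M_k(\C)\ra A$ to be $T_0$ followed by the inclusion $B_n\hookrightarrow A$.

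A triangle-inequality estimate finishes the argument: since $T\circ S$ is a contraction and $S(b_a)=S_0(b_a)$, for $a\in F$ one has $\|T\circ S(a)-a\|\le\|T\circ S(a)-T\circ S(b_a)\|+\|T_0\circ S_0(b_a)-b_a\|+\|b_a-a\|<3\ep$. Hence $A$ has the completely positive approximation property and is nuclear. The main obstacle is the possible non-injectivity of the connecting maps, which prevents the approximating maps of the $A_n$ from descending directly to $A$; this is precisely what is circumvented by replacing $A_n$ with its nuclear quotient $B_n$ and then transporting the compression $S_0$ to all of $A$ via Arveson's theorem, the one place where injectivity of $M_k(\C)$ is essential.
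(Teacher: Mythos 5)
The paper does not prove this theorem at all: it is a list of known facts, with (i), (iii) and (iv) cited from Blackadar and (ii) --- the only item you actually argue --- left uncited but equally standard (it appears in the same reference, e.g.\ as a consequence of the permanence properties of nuclearity). Your proof of (ii) is correct, and it is essentially the standard textbook argument. The two-step reduction is sound: the images $B_n=\mu_n(A_n)$ are $C^*$-subalgebras of $A$ (images of $*$-homomorphisms are closed), they increase because $\mu_n=\mu_{n+1}\circ\phi_n$, their union is dense, each is nuclear because nuclearity passes to quotients, and the CPAP transfer together with the $3\ep$ estimate is right. Two remarks on what your route costs and buys. First, it imports two genuinely nontrivial theorems --- quotient-stability of nuclearity (Choi--Effros, resting on the lifting theorem or on semidiscreteness of the bidual) and the equivalence of nuclearity with the completely positive approximation property --- so it is self-contained only relative to those; that seems unavoidable if one insists on handling non-injective connecting maps, which, as you correctly identify, is the real obstruction. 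Second, Arveson's extension theorem in its usual form is stated for unital operator systems, while $B_n$ and $A$ need not be unital; you should either first extend $S_0$ to a completely positive contraction on $B_n+\C 1\subseteq A^\sim$ (a standard lemma) or cite the non-unital version for completely positive contractions into $M_k(\C)$. An alternative route, closer in spirit to the paper's \Cref{c}, is tensorial: $\ot_{C^*\text{-}\max}$ commutes with arbitrary inductive limits, so one may compare $\ilim(A_n\ot_{C^*\text{-}\min}C)$ with $A\ot_{C^*\text{-}\min}C$ and $A\ot_{C^*\text{-}\max}C$; but making the minimal tensor product commute with inductive limits in the presence of non-injective connecting maps needs its own argument, so your CPAP proof is arguably the cleaner self-contained one.
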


%=================================================%=================================================
\subsubsection{Inductive limits in the category of operator systems}
The existence of inductive limit for the category of operator systems was shown in \cite[$\S 2$]{caralgebra} with connecting morphism as the unital complete order isomorphisms. \\
Let $(\mS,\{\lambda_n\}_{n=1}^{\infty})$ denote the inductive limit of operator systems $\{\mS_n\}$ with $u_n: \mS_n \ra \mS_{n+1}$ the unital complete order ismorphisms, then the unital complete order isomorphisms $\lambda_n : \mS_n \ra \mS$, for
$n=1, 2, \cdots$ are such that (i) $\lambda_n=\lambda_{n+1}\circ u_n$ and (ii) $ \mS=\overline{\bigcup_{n
=1}^{\infty} \lambda_n (\mS_n)}$. Moreover $\mS$
has the universal property that if $\mT$ is an operator system and , $\phi_n : \mS_n \ra \mT$
are unital completely positive maps such that $\phi_n=\phi_{n+1} \circ u_n$ for each $n$, then
there is a unital completely positive map $\phi: \mS \ra \mT$ such that $\phi_n=\phi \circ \lambda_n$.
\vspace{3mm}

The next proposition about the inductive limit of operator systems and their universal $C^*$-covers is well known, and was also used in the Proof of \cite[Proposition 16]{kirch}. For the sake of completeness, we outline the proof.
\begin{proposition}\label{univindlim} $C_u^*(\ilim \mS_n)=\ilim C_u^*(\mS_n),$ for an inducitve sequence $\{\mS_n\}_{n=1}^{\infty}$ with $u_n: \mS_n \ra \mS_{n+1}\; \;\forall n\in \N$ as the connecting complete order isomorphisms.
\end{proposition}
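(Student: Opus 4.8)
The plan is to exhibit the isomorphism through the two relevant universal properties: that of the operator system inductive limit and that of the universal $C^*$-algebra. First I would record the ingredients. Writing $i_n : \mS_n \ra C^*_u(\mS_n)$ for the canonical inclusions, \Cref{extcover}(i) upgrades each connecting complete order isomorphism $u_n$ to a unital $\ast$-monomorphism $\overline{u_n} : C^*_u(\mS_n) \ra C^*_u(\mS_{n+1})$ with $\overline{u_n}\circ i_n = i_{n+1}\circ u_n$. Thus $\{C^*_u(\mS_n), \overline{u_n}\}$ is a genuine inductive sequence of $C^*$-algebras, and I let $(A, \{\mu_n\})$ denote its $C^*$-algebraic inductive limit, so that $\mu_{n+1}\circ\overline{u_n} = \mu_n$ and $A = \overline{\bigcup_n \mu_n(C^*_u(\mS_n))}$. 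Let $(\mS, \{\lambda_n\})$ be the operator system inductive limit, with $i_{\mS}:\mS \ra C^*_u(\mS)$ the canonical inclusion. The goal is a $\ast$-isomorphism $C^*_u(\mS)\cong A$.

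Next I would build a map $C^*_u(\mS) \ra A$. The maps $\mu_n\circ i_n : \mS_n \ra A$ are unital completely positive, and they are compatible with the operator system sequence since $\mu_{n+1}\circ i_{n+1}\circ u_n = \mu_{n+1}\circ\overline{u_n}\circ i_n = \mu_n\circ i_n$. By the universal property of $\mS = \ilim \mS_n$ there is a unital completely positive $\psi:\mS \ra A$ with $\psi\circ\lambda_n = \mu_n\circ i_n$ for all $n$; the universal property of $C^*_u(\mS)$ then extends $\psi$ to a unital $\ast$-homomorphism $\overline{\psi}:C^*_u(\mS)\ra A$ with $\overline{\psi}\circ i_{\mS} = \psi$.

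For the reverse map I would run the universal properties in the other order. Each $i_{\mS}\circ\lambda_n:\mS_n \ra C^*_u(\mS)$ is unital completely positive, so by the universal property of $C^*_u(\mS_n)$ it extends to a unital $\ast$-homomorphism $\rho_n:C^*_u(\mS_n)\ra C^*_u(\mS)$ with $\rho_n\circ i_n = i_{\mS}\circ\lambda_n$. Comparing on the generating operator system $i_n(\mS_n)$ and using $\lambda_{n+1}\circ u_n = \lambda_n$ shows $\rho_{n+1}\circ\overline{u_n}$ and $\rho_n$ agree there, hence everywhere; so $\{\rho_n\}$ is compatible with the $C^*$-sequence, and the universal property of $A$ yields a unital $\ast$-homomorphism $\overline{\rho}:A \ra C^*_u(\mS)$ with $\overline{\rho}\circ\mu_n = \rho_n$.

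Finally I would check that $\overline{\psi}$ and $\overline{\rho}$ are mutually inverse, which is where the only real care is needed but no genuine difficulty arises. On $i_{\mS}(\mS)$ one computes $\overline{\rho}\circ\overline{\psi}\circ i_{\mS} = \overline{\rho}\circ\psi$, and evaluating on each $\lambda_n(\mS_n)$ gives $\overline{\rho}\circ\mu_n\circ i_n = \rho_n\circ i_n = i_{\mS}\circ\lambda_n$; since $\bigcup_n\lambda_n(\mS_n)$ is dense in $\mS$ this forces $\overline{\rho}\circ\overline{\psi}$ to be the identity on the generating set $i_{\mS}(\mS)$, hence on all of $C^*_u(\mS)$. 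Symmetrically, $\overline{\psi}\circ\overline{\rho}\circ\mu_n$ agrees with $\mu_n$ on $i_n(\mS_n)$, hence on $C^*_u(\mS_n)$, and density of $\bigcup_n\mu_n(C^*_u(\mS_n))$ gives $\overline{\psi}\circ\overline{\rho} = \mathrm{id}_A$. I expect the main obstacle to be purely organizational: keeping straight which universal property is invoked at each stage and verifying the several compatibility squares. The facts that a $\ast$-homomorphism is determined by its values on a generating operator system, and that the images $\lambda_n(\mS_n)$ and $\mu_n(C^*_u(\mS_n))$ are dense, together do all the actual work.
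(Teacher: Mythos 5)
Your proof is correct, and every ingredient you use is available in the paper: the upgrade of the connecting maps via \Cref{extcover}(i), the universal property of the operator system limit $\ilim \mS_n$, the extension property of each $C^*_u(\mS_n)$ and of $C^*_u(\ilim \mS_n)$, the universal property of the $C^*$-algebraic inductive limit, and the density/generation arguments that make $*$-homomorphisms agree. Your setup coincides with the paper's, but the conclusion runs differently. The paper never constructs your map $\overline{\psi}: C^*_u(\mS) \ra A$; instead it embeds $\mS$ into $A = \ilim C^*_u(\mS_n)$ (so that $A$ becomes a $C^*$-cover of $\mS$) and then shows, essentially by your construction of $\overline{\rho}$ with a general target, that for every unital complete order isomorphism $\theta : \mS \ra B$ there is a surjective $*$-homomorphism $A \ra C^*(\theta(\mS))$ compatible with $\theta$; it then declares $A = C^*_u(\mS)$, implicitly appealing to the fact that a $C^*$-cover surjecting onto every other $C^*$-cover (by maps fixing $\mS$) is characterized up to isomorphism as the universal cover. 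Your two-sided argument --- producing $\overline{\psi}$ from the universal property of $C^*_u(\mS)$, producing $\overline{\rho}$ from the universal properties of the $C^*_u(\mS_n)$ and of $A$, and checking that the two compositions are the identity on dense generating sets --- is more self-contained: it avoids the uniqueness lemma the paper leaves unstated and delivers the identification as an explicit $*$-isomorphism fixing $\mS$. What the paper's one-sided pattern buys is reusability: the same ``embed, then surject onto every competitor'' scheme is exactly what is redeployed for the $C^*$-envelope statement in \Cref{mainresult}, where no analogue of your $\overline{\psi}$ can exist because $C^*_e$ has no universal extension property for unital completely positive maps.
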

\begin{proof}
Let $(\mS, \{\lambda_n\}_{n=1}^{\infty})$ denote the inductive limit of the inductive sequence $\{\mS_n\}_{n=1}^{\infty}$. Using \Cref{extcover}(i), there exist a unique
$*$-homomorphism $\widetilde{u_n} : C_u^*(\mS_n) \ra C_u^*(\mS_{n+1})$ such that $\widetilde{u_{n}} \circ i_n= i_{n+1} \circ u_{n},$ where $i_n:\mS_n \ra C^*_u(\mS_n)$ denote the natural complete order inclusion for all $n$. Now, the universal property of inductive limits implies that there exist a unital complete order isomorphism $i$ such that the following diagram commutes:
\[\begin{tikzcd}
  \mS_1 \arrow{d}{i_i} \arrow{r}{u_{1}} & \mS_2 \arrow{d}{i_2} \arrow{r}{u_{2}} & \mS_3 \arrow{d}{i_3} \arrow{r}{u_{3}} & \cdots \arrow{r} & \mS \arrow{d}{i} \\ C^*_u(\mS_1) \arrow{r}{\widetilde{u_1} } & C^*_u(\mS_3) \arrow{r}{\widetilde{u_2} } & C^*_u(\mS_3) \arrow{r}{\widetilde{u_3} } & \cdots  \arrow{r} & \displaystyle\ilim(C^*_u(\mS_n))
\end{tikzcd}\]
Clearly, $C^*(i(S))=\ilim C^*_u(\mS_n)$.
Let $\theta: \mS \ra B$ be any other complete order isomorphism. Using the complete order isomorphisms $\theta_n= \theta \circ \lambda_n  : \mS_n \ra B$, there exist surjective $*$-homomorphisms $\widetilde{\theta_n}: C^*_u(\mS_n) \ra C^*(\theta_n(\mS_n)) \subset B$ for all $n\in \N$. Again $\{C^*(\theta_n(\mS_n))\}_{n=1}^{\infty}$ is an inductive sequence with inclusion maps as the connecting maps, and $\ilim C^*(\theta_n(\mS_n)) = C^*(\theta(\mS)) \subset B$. Thus there exist a surjective $*$-homomorphism  $\pi: \ilim  C^*_u(\mS_n) \ra \ilim C^*(\theta_n(\mS_n))=C^*(\theta(\mS))$, and hence 
 $C_u^*(\mS)=\ilim C_u^*(\mS_n).$
\end{proof}
%=================================================
Our aim is to explore the class of operator systems having a particular property under operator system inductive limits. We
present a self-contained treatment here, giving particular emphasis to the role of
the $C^*$-envelopes.

%=================================================%=================================================%=================================================

\section[op sys]{Operator system Inductive limits and $C^*$-envelopes}\label{s:2}
From the definition of $C^*$-envelopes, we know that every unital complete order isomorphism
$\phi : \mS \ra \mT$ extends to a unital $C^*$-algebra homomorphism $\pi : C^*_e (\mS)\ra C^*_e (\mT )$ which in general may not be injective, thus unlike universal $C^*$-covers, for the $C^*$-envelopes and inductive sequence $\{\mS_n\}_{n=1}^{\infty}$, $C_e^*(\ilim \mS_n)=\ilim C_e^*(\mS_n)$ is not expected. 

%=================================================
%=================================================

\subsection{$C^*_e$-increasing}
There is no general method to determine the $C^*$-envelopes of even the the lower dimensional operator systems. It is rather strange but amid the list of operator systems whose $C^*$-envelopes are known, majority of operator system pairs are such that the respective pair of $C^*$-envelopes behave as nicely as a pair of universal $C^*$-covers does, that is, for $\mS \subset \mT$ the $C^*$-algebra generated by $\mS$ in $C^*_e(\mT )$
coincides with the $C^*$-envelope of $\mS$, thus giving $C^*_e(\mS) \subset C^*_e(\mT)$:

\vspace{3mm}
%=================================================
\begin{example}[Operator systems associated to discrete groups]\label{ss:1} 
Let $G$ be a countable discrete group,
$\mF$ denote a generating set of $G$ and $\mS(\mF)$ the
operator system associated to $\mF$ by Farenick et al. in
\cite{disgrp}, i.e., $\mS(\mF) := \text{span}\{1, u, u^*:\ u \in \mF\}
\subset C^*(G)$, where $C^*(G)$ denotes the full group $C^*$-algebra
of the group $G$ (\cite[Chapter $8$]{pisier}).  It was shown in
\cite{disgrp} that if $\mF$ is a generating set of the free group
$\mathbb{F}_n$, then $\mS(\mF)$ is independent of the generating set
$\mF$ and is simply denoted by $\mS_n$. Also, $C^*_e(\mS(\mathfrak{u}))=C^*(G)$ (\cite[Proposition 2.2]{disgrp}) and $C^*_e(\mS_r(\mathfrak{u}))=C^*_r(G),$ the reduced group $C^*$-algebra (\cite[Proposition 2.9]{opsysnuc}.  
Recall from \cite[Proposition 2.5.8-2.5.9]{brownozawa}:\\
 If $H$ is a subgroup of a discrete group $G$, then there is a canonical inclusion $$C^*(H) \subset C^*(G).$$ Also, $$C^*_r(H) \subset C^*_r(G)$$ canonically.\\
 Using the fact that the $C^*$-envelope of an operator system associated to the group is the group $C^*$-algebra itself, the preceding statement can be translated in terms of operator systems:\\  \emph{In case $\mathfrak{u}$ and $\mathfrak{v}$ are generating sets of $H \subset G$ and $G$ respectively, then the complete order inclusion $\mS(\mathfrak{u}) \subset \mS(\mathfrak{v})$ can be extended canonically to their $C^*$-envelopes: $C^*_e(\mS(\mathfrak{u})) \subset C^*_e(\mS(\mathfrak{v})).$}
 \end{example}
 
 %================================================= 
\begin{example}[Graph Operator Systems]\label{ss:1}  
 Given a finite graph $G$ with $n$-vertices, Kavruk et al.~in
\cite{KPTT1} associated an operator system $\mS_G$ as the finite
dimensional operator subsystem of $M_n(\C)$ given by
 $$\mS_G=\mathrm{span} \{\{E_{i,j}: (i,j) \in G\}\cup\{E_{i,
  i}: 1 \leq i \leq n\} \} \subseteq M_n(\C),$$ where $\{E_{i,j}\}$ is
the standard system of matrix units in $M_n(\C)$ and $(i, j)$ denotes
(an unordered) edge in $G$. Ortiz and
Paulsen proved in \cite[Theorem 3.2]{ortiz} that $C^*_e(\mS_G) =
C^*(\mS_G) \subseteq M_n(\C)$ for any graph $G$. Thus, we have: \\
\emph{For graphs $G_1 \subset G_2$, the complete order inclusion $\mS_{G_1} \subset \mS_{G_2}$ extends canonically to a $*$-isomorphism $$C^*_e(\mS_{G_1}) \subset C^*_e(\mS_{G_2})  \subset M_n.$$}
 \end{example}
%================================================= 
 \begin{example}[Unital $C^*$-algebras]  Since the $C^*$-envelope of an operator system which is completely order isomorphic to a unital $C^*$-algebra coincides with itself (\cite[Proposition 2.2 (iii)]{opsysnuc}), we naturally have:\\ \emph{If $\mS$ and $\mT$ are both unitally completely order
  isomorphic to unital $C^*$-algebras, then the inclusion $\mS \subset \mT$ extends canonically to $C^*_e(\mS) \subset C^*_e(\mT).$}
  \end{example}
  
 %=================================================
 
  \begin{example}[Universal Operator Systems] 
  The universal operator systems (\cite[$\S 1.1$]{kavnuc}) are those for which the universal $C^*$-cover and the $C^*$-envelope coincide. Since for $\mS \subset \mT$, $C^*_u(\mS) \subset C^*_u(\mT)$, trivially:\\ \emph{For a pair of universal operator systems $\mS \subset \mT$, $C^*_e(\mS) \subset C^*_e(\mT).$}
  \end{example} 
  
 %=================================================
 \begin{example}[Operator systems for Non-Commuting n-cubes]  Farenick et al. (in \cite{disgrp}) introduced an $(n+1)$-dimensional operator
system  $NC(n)$ as follows:
\vspace*{1mm}

 Let $\mathcal{G} = \{h_1, . . . , h_n\}$, let
$\mathcal{R} = \{h^*_j = h_j , \|h\| \leq 1, 1 \leq j \leq n\}$ be the relation in the set $\mathcal{G}$, and let 
$C^*(\mathcal{G}|\mathcal{R})$ denote the universal unital $C^*$-algebra
generated by $\mathcal{G}$ subject to the relation $\mathcal{R}$. The operator
system $$NC(n) := span\{1, h_1, ..., h_n\} \subset C^*(\mathcal{G}|\mathcal{R})$$
is called \emph{the operator system of the non-commuting $n$-cube.}

They showed that upto a $*$-isomorphism, $C^*_e (NC(n)) =
C^*(*_n\mathbb{Z}_2)$ (\cite[Corollary $5.6$]{disgrp}), so that $*_n \mathbb{Z}_2 \subset *_{n+1} \mathbb{Z}_2$, and hence $C^*(*_n \mathbb{Z}_2) \subset C^*(*_{n+1} \mathbb{Z}_2)$ for all $n$ (as in \Cref{ss:1}), so that:\\
\emph{For $n \in \mathbb{N}$, the complete order inclusion $NC(n) \subset NC(n+1)$ extends to complete order inclusion $C^*_e(NC(n)) \subset C^*_e(NC(n+1))$.}
\end{example} 

%=================================================
\begin{example}[Operator system with simple $C^*$-envelope] 
In case $\mS \subset \mT$ is such that $C^*_e(\mS)$ is simple, we trivially have every homomorphism from $C^*_e(\mS)$ is injective, thus  $C^*_e(\mS) \subset C^*_e(\mT)$. 
\end{example}

%=================================================
\begin{example}[Cuntz Operator System] 

 From \cite{zheng}, for the generators $s_1,s_2,\cdots, s_n$ ($n \geq 2$) of the Cuntz algebra $\mO_n$ and  identity $I$,  the Cuntz operator system $\mS_n$ denotes the operator system generated by $s_1,s_2,\cdots, s_n$, that is,
$$\mS_n = span\{I, s_1,s_2,\cdots, s_n, s_1^*,s_2^*,\cdots, s_n^*\} \subset \mO_n.$$
 Similarly, for the generators  $s_1,s_2,\cdots$ of
$\mO_{\infty}$,$$ \mS_{\infty} = span\{I, s_1,s_2,\cdots, s_1^*,s_2^*,\cdots \} \subset \mO_{\infty}.$$
Also, by \cite{zheng}, for each $n <m$, $\mS_n \subset \mS_m$ and $C^*_e(\mS_n)=\mO_n$ (see also \cite[Proposition $2.8$]{embopsys}). Clearly, $\mO_n \subset \mO_m$.
\end{example}

%=================================================
 For our convenience, we give a name to this property followed by all above examples: 

\begin{definition} We shall call a pair  of operator system $\mS$ and $\mT$ as \emph{$C^*_e$-increasing} if $\mS \subseteq \mT$, then their corresponding $C^*$-envelopes satisfy $C^*_e(\mS)  \subseteq C^*_e(\mT)$. 
\end{definition}

%=================================================
%=================================================

 \subsection{Inductive limit of $C^*_e$-increasing operator systems}
 For an ascending sequence of operator systems, in general, the complete order inclusions need not extend to the complete order inclusions for their sequence of $C^*$-envelopes. But, if each pair in the ascending sequence is $C^*_e$-increasing, we have some ease:
 
\begin{theorem}\label{mainresult} Let $\{\mS_n\}_{n=1}^{\infty}$ be an increasing collection of operator systems such that, for each $n$, $\mS_n \subset \mS_{n+1}$ is $C^*_e$-increasing, then we have
\begin{equation} \label{eq:1} C^*_e(\ilim \mS_n)= \ilim C^*_e(\mS_n).
\end{equation} 
And, moreover if each $\mS_n $ is separable, exact and contains enough unitaries of $C^*_e(\mS_n)$, then $\ilim \mS_n$ embeds into $\mO_2$.
\end{theorem}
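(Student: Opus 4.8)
The plan is to prove the two assertions separately, since the embedding into $\mO_2$ is a consequence of the identity \eqref{eq:1} together with the structure theory of exact $C^*$-algebras. First I would establish \eqref{eq:1}. Since each pair $\mS_n \subset \mS_{n+1}$ is $C^*_e$-increasing, the complete order inclusions $u_n : \mS_n \ra \mS_{n+1}$ extend to unital $*$-monomorphisms $\widetilde{u_n} : C^*_e(\mS_n) \ra C^*_e(\mS_{n+1})$. This gives an inductive sequence of $C^*$-algebras with \emph{injective} connecting maps, so $\ilim C^*_e(\mS_n)$ exists as a $C^*$-algebra. The strategy mirrors the proof of \Cref{univindlim}: using the universal property of the operator system inductive limit applied to the inclusions $\lambda_n : \mS_n \ra \ilim C^*_e(\mS_n)$ (composing the natural embeddings $\mS_n \subset C^*_e(\mS_n)$ with the limit maps), I would obtain a complete order embedding $i : \ilim \mS_n \ra \ilim C^*_e(\mS_n)$ whose image generates the limit $C^*$-algebra, so that $\big(\ilim C^*_e(\mS_n), i\big)$ is a $C^*$-cover of $\ilim \mS_n$.

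The main obstacle is upgrading this $C^*$-cover to the $C^*$-\emph{envelope}, i.e. establishing \emph{minimality}. Unlike the universal cover case in \Cref{univindlim} (where maximality makes the argument formal), here I must rule out a proper quotient. The cleanest route is to invoke the universal minimality property of $C^*_e(\ilim \mS_n)$: there is a surjective unital $*$-homomorphism $\pi : \ilim C^*_e(\mS_n) \ra C^*_e(\ilim \mS_n)$ fixing $\ilim \mS_n$. To show $\pi$ is injective, I would use \Cref{extcover}(ii): for each $n$, the \emph{surjective} complete order isomorphism from $\mS_n$ onto $\lambda_n(\mS_n)$ extends to an injective $*$-homomorphism at the level of $C^*$-envelopes, and by the minimality property of $C^*_e(\mS_n)$ this is compatible with the connecting maps. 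Passing to the limit and using that injectivity of a map out of a $C^*$-algebra inductive limit is detected levelwise (via the kernel criterion in \cite[Proposition $6.2.4$]{rordamk}(iii), since each $\mu_n$ on the cover side is injective), I would conclude $\ker \pi = 0$, giving \eqref{eq:1}.

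For the second assertion, I would assemble the hypotheses. Each $\mS_n$ is exact and contains enough unitaries of $C^*_e(\mS_n)$, so by \Cref{opsysenv}(i) exactness of $\mS_n$ forces $C^*_e(\mS_n)$ to be exact. Hence $\ilim C^*_e(\mS_n)$ is an inductive limit of exact $C^*$-algebras with injective connecting maps, which is exact by \Cref{Cindlim}(iii); by \eqref{eq:1} this limit is precisely $C^*_e(\ilim \mS_n)$. Separability of each $\mS_n$ yields separability of the limit operator system and hence of its $C^*$-envelope. Therefore $C^*_e(\ilim \mS_n)$ is a separable, unital, exact $C^*$-algebra.

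Finally I would invoke the Kirchberg--Phillips embedding theorem: every separable exact $C^*$-algebra embeds (unitally, since $\mO_2$ is unital and our algebra is unital) into the Cuntz algebra $\mO_2$. Composing the complete order embedding $\ilim \mS_n \hookrightarrow C^*_e(\ilim \mS_n)$ with this $*$-monomorphism into $\mO_2$ gives the desired embedding of $\ilim \mS_n$ into $\mO_2$. I expect the genuinely delicate point to be the injectivity of $\pi$ in the minimality step; the remaining pieces are bookkeeping with the cited structural theorems, and the enough-unitaries hypothesis is needed precisely to transfer exactness from the operator systems to their envelopes via \Cref{opsysenv}(i).
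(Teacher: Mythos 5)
Your overall architecture for \eqref{eq:1} --- form the $C^*$-cover $\big(\ilim C^*_e(\mS_n),\, i\big)$ of $\ilim \mS_n$, take the Hamana minimality surjection $\pi : \ilim C^*_e(\mS_n) \ra C^*_e(\ilim \mS_n)$, and prove $\pi$ injective by checking injectivity levelwise via the kernel criterion --- is viable, and it is genuinely different from the paper's route. But your justification of the crucial levelwise step has a genuine gap. The restriction $\pi_n = \pi \circ \mu_n : C^*_e(\mS_n) \ra C^*_e(\ilim \mS_m)$ has image the \emph{concrete} subalgebra $C^*(\lambda_n(\mS_n)) \subseteq C^*_e(\ilim \mS_m)$. \Cref{extcover}(ii), applied to the surjective complete order isomorphism $\mS_n \ra \lambda_n(\mS_n)$, only identifies $C^*_e(\mS_n)$ with the \emph{abstract} envelope $C^*_e(\lambda_n(\mS_n))$; it says nothing about $C^*(\lambda_n(\mS_n))$, which a priori is just some $C^*$-cover of $\lambda_n(\mS_n)$ and comes equipped only with a surjection \emph{onto} $C^*_e(\lambda_n(\mS_n))$ --- pointing the wrong way for you. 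Identifying the concretely generated subalgebra with the abstract envelope is exactly the content of the $C^*_e$-increasing condition, here for the pair $(\mS_n, \ilim \mS_m)$, which is not among the hypotheses (only consecutive pairs are assumed $C^*_e$-increasing, and this does not formally pass to the limit). So, as written, the injectivity argument assumes a relative of the statement being proved.

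The gap is repairable with a short standard argument you did not state: let $q_n : C^*(\lambda_n(\mS_n)) \ra C^*_e(\mS_n)$ be the Hamana surjection for the cover $C^*(\lambda_n(\mS_n))$ of $\mS_n$; then $q_n \circ \pi_n$ is a unital $*$-endomorphism of $C^*_e(\mS_n)$ fixing the generating copy of $\mS_n$, hence equals the identity, so each $\pi_n$ is injective, and your kernel-criterion step then yields $\ker \pi = 0$. Note that the paper avoids this issue entirely: it never proves injectivity of any map, but instead verifies Hamana's universal property for $\ilim C^*_e(\mS_n)$ directly, taking an arbitrary unital complete order isomorphism $\phi : \mS \ra B$, producing the levelwise minimality surjections $\widetilde{\phi_n} : C^*(\phi_n(\mS_n)) \ra C^*_e(\mS_n)$, checking compatibility with the connecting maps, and assembling them into a surjection $C^*(\phi(\mS)) \ra \ilim C^*_e(\mS_n)$ fixing $\mS$. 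Your treatment of the second assertion (exactness of each $C^*_e(\mS_n)$ via \Cref{opsysenv}(i) using enough unitaries, exactness of the limit algebra via \Cref{Cindlim}(iii), separability, then Kirchberg's $\mO_2$-embedding theorem) is sound and matches the paper's argument, which packages the last step as the cited equivalence between $\mO_2$-embeddability of a separable operator system and exactness of its $C^*$-envelope.
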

\begin{proof}
Let $(\mS, \{\lambda_n\}_{n=1}^{\infty})$ denote the inductive limit of the increasing sequence $\{\mS_n\}_{n=1}^{\infty}$. Using $C^*_e$-increasing nature of each pair, there exist a unique
$*$-homomorphism $\widetilde{u_n} : C_e^*(\mS_n) \ra C_e^*(\mS_{n+1})$ such that $\widetilde{u_{n}} \circ i_n= i_{n+1} \circ u_{n},$ where $i_n:\mS_n \ra C^*_e(\mS_n)$ denotes the natural complete order inclusion for all $n$. Now, the universal property of inductive limits implies that there exist a unital complete order isomorphism $i$ such that the following diagram commutes:
\[\begin{tikzcd}
  \mS_1 \arrow{d}{i_i} \arrow{r}{u_{1}} & \mS_2 \arrow{d}{i_2} \arrow{r}{u_{2}} & \mS_3 \arrow{d}{i_3} \arrow{r}{u_{3}} & \cdots \arrow{r} & \mS \arrow{d}{i} \\ C^*_e(\mS_1) \arrow{r}{\widetilde{u_1} } & C^*_e(\mS_3) \arrow{r}{\widetilde{u_2} } & C^*_e(\mS_3) \arrow{r}{\widetilde{u_3} } & \cdots  \arrow{r} & \displaystyle\ilim(C^*_e(\mS_n))
\end{tikzcd}\]

Clearly, $C^*(i(\mS))= \ilim(C^*_e(\mS_n)).$\\
Claim: $C^*_e(\mS)=C^*(i(\mS))= \ilim (C^*_e(\mS_n)),$ that is, if $\phi: \mS \ra B$ is any other unital complete order isomorphism then there exist a surjective $*$-homomorphism $\pi: C^*(\phi(\mS)) \ra C^*(i(\mS))= \ilim(C^*_e(\mS_n)).$ \\
So, let $\phi: \mS \ra B$ be any unital complete order isomorphism. Then for each $n$, $\phi_n=\phi \circ \lambda_n : \mS_n \ra B$ is a complete order isomorphim. Now, by minimality of $C^*$-envelopes, there exist surjective $*$-homomorphisms $\widetilde{\phi_n}: C^*(\phi_n(\mS_n)) \ra C^*_e(\mS_n)$ for $n \in \N$. Again $\{C^*(\phi_n(\mS_n))\}_{n=1}^{\infty}$ is an inductive sequence with inclusion maps as the connecting morphisms, and $\ilim C^*(\phi_n(\mS_n)) =C^*(\phi(\mS)) \subset B$. So that using the universal property of inductive limits there exist a surjective $*$-homomorphism $\pi:  \ilim C^*(\phi_n(\mS_n)) \ra \ilim  C^*_e(\mS_n)$, and hence $ C^*_e(\mS)=C^*(i(\mS))= \ilim (C^*_e(\mS_n))$. 
By \Cref{opsysenv}, exactness of $\mS_i$ implies exactness of $C^*_e(\mS_i)$, and since inductive limit of exact $C^*$-algebras is exact (\Cref{Cindlim}),  we have $\ilim C^*_e(\mS_i)$ is exact.\\ Thus, using the fact that a separable operator system embeds into $\mO_2$ if and only if it's $C^*$-envelope is exact (\cite[Theorem 3.1]{embopsys}), we have $\ilim \mS_i$ embeds into $\mO_2$.
\end{proof}

%=================================================

Since $\mS_n \subseteq \mS_{n+1}$ extends to their universal $C^*$-covers, $C^*_u(\mS_n) \subseteq C^*_u(\mS_{n+1})$, preceding proof implies the following
\begin{corollary}\label{cor} For a collection $\{\mS_n\}$ of exact universal increasing operator systems with each $\mS_n$ containing enough unitaries of $C^*_e(\mS_n)$, we have $\ilim C^*_e(\mS_n)$ is exact.
\end{corollary}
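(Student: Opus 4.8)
The plan is to prove the corollary as a direct consequence of the machinery assembled in \Cref{mainresult}, so I would not reprove the inductive-limit-of-$C^*$-envelopes identity but rather observe that the hypotheses of the corollary are a special case of those in the theorem and then read off the relevant intermediate conclusion. First I would note that a universal operator system, by definition, satisfies $C^*_u(\mS) = C^*_e(\mS)$, and that the inclusion $\mS_n \subseteq \mS_{n+1}$ always extends to a $*$-monomorphism $C^*_u(\mS_n) \hookrightarrow C^*_u(\mS_{n+1})$ by \Cref{extcover}(i). Combining these two facts shows that each pair $\mS_n \subset \mS_{n+1}$ is automatically $C^*_e$-increasing, so the collection $\{\mS_n\}$ satisfies the standing hypothesis of \Cref{mainresult}.

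Next I would invoke the identity $C^*_e(\ilim \mS_n) = \ilim C^*_e(\mS_n)$ from \eqref{eq:1}, which now applies. The point of the corollary is to extract the exactness of $\ilim C^*_e(\mS_n)$, and this is exactly the chain of reasoning carried out in the last two sentences of the proof of \Cref{mainresult}: since each $\mS_n$ is exact and contains enough unitaries of $C^*_e(\mS_n)$, \Cref{opsysenv}(i) upgrades exactness of the operator system $\mS_n$ to exactness of the $C^*$-algebra $C^*_e(\mS_n)$. The connecting maps $\widetilde{u_n} : C^*_e(\mS_n) \ra C^*_e(\mS_{n+1})$ are injective here (they are restrictions of the injective maps on the universal $C^*$-covers, using that $C^*_u = C^*_e$ for universal systems), so \Cref{Cindlim}(iii) — the statement that an inductive limit with injective connecting maps of exact $C^*$-algebras is exact — yields that $\ilim C^*_e(\mS_n)$ is exact.

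The only genuine subtlety, and the step I would be most careful about, is the injectivity of the connecting maps $\widetilde{u_n}$ at the level of $C^*$-envelopes, since \Cref{Cindlim}(iii) explicitly requires injective connecting morphisms and the general discussion at the start of \Cref{s:2} warns that order embeddings need not extend injectively to $C^*$-envelopes. For universal operator systems this obstruction disappears: the equality $C^*_e = C^*_u$ lets me transport the injectivity guaranteed by \Cref{extcover}(i) for universal $C^*$-covers directly to the $C^*$-envelopes. I would make this identification explicit so that the hypotheses of \Cref{Cindlim}(iii) are unambiguously met, and then the conclusion is immediate.
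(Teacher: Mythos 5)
Your proposal is correct and follows essentially the same route as the paper: the paper likewise observes that for universal operator systems the inclusions $C^*_u(\mS_n) \subseteq C^*_u(\mS_{n+1})$ give the $C^*_e$-increasing property, and then reuses the final portion of the proof of \Cref{mainresult} (exactness of $C^*_e(\mS_n)$ via \Cref{opsysenv}(i) plus \Cref{Cindlim}(iii)). Your explicit verification that the connecting maps on the $C^*$-envelopes are injective is a point the paper leaves implicit, but it is the same argument.
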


%=================================================

\begin{corollary} For an increasing collection of operator systems $\{\mS_n\}$, such that for each $n$, the pair $\mS_n \subset \mS_{n+1}$ is $C^*_e$-increasing, and each $C^*_e(\mS_n)$ is an AF-algebra, we have $C^*_e(\ilim \mS_n)$ is also an AF-algebra.
\end{corollary}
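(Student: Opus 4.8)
The plan is to reduce the statement entirely to the identity \eqref{eq:1} already established in \Cref{mainresult} together with the closure of AF-algebras under countable inductive limits recorded in \Cref{Cindlim}(iv). Since the only hypothesis needed to invoke \eqref{eq:1} is that each pair $\mS_n \subset \mS_{n+1}$ be $C^*_e$-increasing — and this is precisely what we are given — I would first apply \Cref{mainresult} to obtain
\[
C^*_e(\ilim \mS_n) = \ilim C^*_e(\mS_n),
\]
so that the question reduces to identifying the right-hand side as an AF-algebra.

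Next I would observe that the $C^*_e$-increasing hypothesis supplies, for each $n$, the unital $*$-homomorphism $\widetilde{u_n} : C^*_e(\mS_n) \ra C^*_e(\mS_{n+1})$ extending the inclusion $\mS_n \subset \mS_{n+1}$, exactly as produced in the proof of \Cref{mainresult}. Thus $\{C^*_e(\mS_n), \widetilde{u_n}\}_{n=1}^{\infty}$ is a genuine inductive sequence in the category of $C^*$-algebras, and by hypothesis every object $C^*_e(\mS_n)$ in this sequence is an AF-algebra. The inductive limit of this sequence is therefore a countable inductive limit of AF-algebras.

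The final step is a direct appeal to \Cref{Cindlim}(iv): any countable inductive limit of AF-algebras is again an AF-algebra. Applying this to the sequence above yields that $\ilim C^*_e(\mS_n)$ is an AF-algebra, and combining with the identity from the first step gives that $C^*_e(\ilim \mS_n)$ is an AF-algebra, as claimed. I do not anticipate a serious obstacle here, since the substantive work — the commutation of $C^*_e$ with the inductive limit — has already been carried out in \Cref{mainresult}; the only point requiring a moment's care is to record that the connecting maps $\widetilde{u_n}$ are bona fide unital $*$-homomorphisms (so that the sequence genuinely lives in the $C^*$-algebra category) and that the index set is countable, both of which are immediate from the $C^*_e$-increasing assumption.
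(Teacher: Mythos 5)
Your proposal is correct and follows exactly the paper's own argument: apply \Cref{mainresult} to identify $C^*_e(\ilim \mS_n)$ with $\ilim C^*_e(\mS_n)$, then invoke \Cref{Cindlim}(iv) on the resulting countable inductive sequence of AF-algebras. Your additional remark that the connecting maps $\widetilde{u_n}$ are genuine unital $*$-homomorphisms is a sound (if implicit in the paper) bookkeeping point, but the substance is identical.
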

\begin{proof} Given such a sequence from \Cref{mainresult},  we have $C^*_e(\ilim \mS_i)=\ilim C^*_e(\mS_i)$, with each $C^*_e(\mS_i)$ an AF-algebra. Thus, $C^*_e(\ilim \mS_i)$ being inductive limit of AF-algebra is again an AF-algebra (\Cref{Cindlim}(iv)).
\end{proof}

%=================================================

\begin{corollary}\label{finiteot} For an increasing collection of operator systems $\{\mS_n\}$  such that either for each $n$, $\mS_n \subseteq C^*_e(\mS_n)$ contain enough unitaries in $C^*_e(\mS_n)$ or $C^*_e(\mS_n)$ is simple for all $n$, we have $C^*_e(\ilim \mS_n \ot_{\min} \mT)=\ilim C^*_e(\mS_n \ot_{\min} \mT)$ and hence is exact, whenever $\mS_n$'s are exact.
\end{corollary}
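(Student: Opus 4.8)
The plan is to deduce the statement from \Cref{mainresult} applied to the sequence $\{\mS_n \ot_{\min} \mT\}$. First I would record two structural facts about the minimal tensor product. Since $\ot_{\min}$ is injective, each connecting complete order embedding $u_n : \mS_n \ra \mS_{n+1}$ induces a complete order embedding $u_n \ot \mathrm{id}_{\mT} : \mS_n \ot_{\min} \mT \ra \mS_{n+1} \ot_{\min} \mT$, so that $\{\mS_n \ot_{\min} \mT\}$ is again an increasing inductive sequence of operator systems; moreover $\ot_{\min}$ commutes with the inductive limit, so that $\ilim(\mS_n \ot_{\min} \mT)$ is exactly the object to which \Cref{mainresult} must be applied. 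Thus the whole statement reduces to checking that each pair $\mS_n \ot_{\min} \mT \subseteq \mS_{n+1} \ot_{\min} \mT$ is $C^*_e$-increasing, after which \Cref{mainresult} yields the desired equality $C^*_e(\ilim(\mS_n \ot_{\min} \mT)) = \ilim C^*_e(\mS_n \ot_{\min} \mT)$.

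The next step is to observe that each hypothesis already forces the base sequence $\{\mS_n\}$ to be $C^*_e$-increasing. If $\mS_n$ contains enough unitaries of $C^*_e(\mS_n)$, then these unitaries lie in $C^*_e(\mS_{n+1})$ and generate there a $C^*$-subalgebra $B$; since $\mS_n \subseteq B$ with enough unitaries, \Cref{opsysenv}(iv) identifies $B$ with $C^*_e(\mS_n)$, giving $C^*_e(\mS_n) \subseteq C^*_e(\mS_{n+1})$. If instead each $C^*_e(\mS_n)$ is simple, then the canonical $*$-homomorphism $C^*_e(\mS_n) \ra C^*_e(\mS_{n+1})$ extending the inclusion has trivial kernel, so again $C^*_e(\mS_n) \subseteq C^*_e(\mS_{n+1})$. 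In particular the connecting maps $C^*_e(\mS_n) \hookrightarrow C^*_e(\mS_{n+1})$ are injective $*$-monomorphisms.

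The heart of the argument, and the step I expect to be the main obstacle, is to identify $C^*_e(\mS_n \ot_{\min} \mT)$ with $C^*_e(\mS_n) \ot_{\min} C^*_e(\mT)$ under either hypothesis. Granting this, the injective $*$-monomorphisms $C^*_e(\mS_n) \hookrightarrow C^*_e(\mS_{n+1})$ tensor, via the injectivity of $\ot_{\min}$ for $*$-homomorphisms, to injective $*$-monomorphisms $C^*_e(\mS_n) \ot_{\min} C^*_e(\mT) \hookrightarrow C^*_e(\mS_{n+1}) \ot_{\min} C^*_e(\mT)$, which is exactly the $C^*_e$-increasing property needed. The identification itself is where the two hypotheses do their work: one checks that $\mS_n \ot_{\min} \mT$ generates $C^*_e(\mS_n) \ot_{\min} C^*_e(\mT)$ as a $C^*$-algebra, so that this is a $C^*$-cover, and then one must show that the canonical quotient onto $C^*_e(\mS_n \ot_{\min} \mT)$ is faithful. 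In the enough-unitaries case I would produce enough unitaries inside this tensor cover and apply \Cref{opsysenv}(iv); in the simple case I would control the kernel of the quotient using simplicity of the $C^*_e(\mS_n)$-factor. This faithfulness is the delicate point, since $C^*$-envelopes do not in general commute with $\ot_{\min}$ for an arbitrary $\mT$.

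Finally, for exactness I would argue as in the proof of \Cref{mainresult}: when the $\mS_n$ (and $\mT$) are exact and contain enough unitaries in their $C^*$-envelopes, \Cref{opsysenv}(i) gives that the factor envelopes $C^*_e(\mS_n)$ and $C^*_e(\mT)$ are exact, whence each $C^*_e(\mS_n \ot_{\min} \mT) = C^*_e(\mS_n) \ot_{\min} C^*_e(\mT)$ is exact as a minimal tensor product of exact $C^*$-algebras. Since the connecting maps are injective, \Cref{Cindlim}(iii) then shows that $\ilim C^*_e(\mS_n \ot_{\min} \mT) = C^*_e(\ilim(\mS_n \ot_{\min} \mT))$ is exact, as claimed.
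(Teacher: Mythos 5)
Your overall strategy coincides with the paper's: both proofs apply \Cref{mainresult} (together with \Cref{cor}, or equivalently \Cref{Cindlim}(iii), for exactness) to the sequence $\{\mS_n \ot_{\min} \mT\}$, and both obtain the required $C^*_e$-increasing property of the pairs $\mS_n \ot_{\min}\mT \subset \mS_{n+1}\ot_{\min}\mT$ by first identifying $C^*_e(\mS_n \ot_{\min}\mT)$ with $C^*_e(\mS_n)\ot_{C^*\text{-}\min}C^*_e(\mT)$ and then tensoring the embeddings $C^*_e(\mS_n)\hookrightarrow C^*_e(\mS_{n+1})$ with the identity, using injectivity of the minimal tensor products. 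The difference is how the identification is handled: the paper simply cites \cite[Lemma 2.14]{embopsys}, whereas you correctly isolate it as the heart of the matter but leave it unproven (``the delicate point''). That is a real, if repairable, gap: in the enough-unitaries case your sketch does complete via \Cref{opsysenv}(iv), since elementary tensors of generating unitaries are generating unitaries of the cover $C^*_e(\mS_n)\ot_{C^*\text{-}\min}C^*_e(\mT)$; but note that this, the simple case (where one needs simplicity of the \emph{whole} minimal tensor product, hence of both factors), and the exactness conclusion all require hypotheses on $\mT$ that the corollary never states --- a defect you partially flag with your parenthetical ``(and $\mT$)'' and which the paper's own treatment shares.

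The step of yours that actually fails is the second one: the claim that either stated hypothesis forces the base sequence $\{\mS_n\}$ to be $C^*_e$-increasing. The paper never claims this; its proof is explicitly conditional (``if $\mS_n \subset \mS_{n+1}$ satisfies $C^*_e$-increasing then \dots''), so $C^*_e$-increasingness of the base pairs is standing data as in \Cref{mainresult}, and the unitaries/simplicity hypotheses enter only through the tensor identification. Your enough-unitaries argument tacitly assumes that a unitary of $C^*_e(\mS_n)$ lying in $\mS_n$ remains a unitary in $C^*_e(\mS_{n+1})$, but unitarity in the envelope is not preserved by non-surjective unital complete order embeddings. Concretely, embed $\mS=\mathrm{span}\{1,z,\bar z\}\subset C(\mathbb{T})$ into $\mS(\mathfrak{v})=\mathrm{span}\{1,u_1,u_2,u_1^*,u_2^*\}\subset C^*(\F_2)$ by $z\mapsto (u_1+u_2)/2$: this is a unital complete order embedding (it is unital completely positive because $(u_1+u_2)/2$ is a contraction, and its inverse is the restriction of the $*$-homomorphism $C^*(\F_2)\ra C(\mathbb{T})$ sending $u_1,u_2\mapsto z$); both systems contain enough unitaries of their envelopes $C(\mathbb{T})$ and $C^*(\F_2)$, yet the image of $z$ is not a unitary of $C^*(\F_2)$, so there is no embedding of $C^*_e(\mS)=C(\mathbb{T})$ into $C^*(\F_2)$ fixing $\mS$, and the pair is not $C^*_e$-increasing. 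Your simple-case argument invokes a ``canonical $*$-homomorphism $C^*_e(\mS_n)\ra C^*_e(\mS_{n+1})$ extending the inclusion,'' but no such map exists in general: minimality of the envelope (and \Cref{extcover}(ii), which requires surjectivity) produces a $*$-homomorphism in the \emph{opposite} direction, from the cover $C^*(\mS_n)\subseteq C^*_e(\mS_{n+1})$ onto $C^*_e(\mS_n)$, and simplicity of its target does not make it injective --- one would need the cover itself to be simple (the Cuntz system realized by Cuntz--Toeplitz isometries illustrates this: the envelope $\mO_n$ is simple, yet the cover generated in the Cuntz--Toeplitz algebra surjects onto it with kernel the compacts). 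So, as written, your argument establishes the corollary only under the additional assumption that each pair $\mS_n\subset\mS_{n+1}$ is $C^*_e$-increasing, which is exactly the implicit setting of the paper's proof.
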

\begin{proof}
Using \cite[Lemma 2.14]{embopsys}, in either case
upto $*$-isomorphism that fixes $\mS \ot_{\min} \mT$, $C^*_e(\mS \ot_{\min} \mT)= C^*_e(\mS) \ot_{C^*\text{-}\min} C^*_e(\mT)$. Thus, using injectivity of operator system $\mathrm{min}$ tensor product and  $C^*$-$\mathrm{min}$ tensor product, if $\mS_n \subset \mS_{n+1}$ satisfies $C^*_e$-increasing then $\mS_n \ot_{\min} \mT \subset \mS_{n+1} \ot_{\min} \mT$ also satisfies $C^*_e$-increasing. Result now follows by \Cref{mainresult} and \Cref{cor}.
\end{proof}

%==================================================
%=================================================

\subsection{Infinite Tensor Product of operator systems}

Infinite tensor product of a collection $\{A_i : i \in \Omega\}$ of $C^*$-algebras, $\ot_{C^*\text{-}\min} A_i$; ${i \in \Omega}$ has been defined as the inductive limit of the collection $B_{\mathcal{F}}$, where $B_{\mathcal{F}}= A_{i_1} \ot_{C^*\text{-}\min} \cdots \ot_{C^*\text{-}\min} A_{i_n},$ for $\mathcal{F}=\{i_1,\cdots,i_n\}\subseteq \Omega$ (\cite[II.9.8]{blackadar2}). Using the similar technique, Pisier in \cite[Page 390]{pisier} discussed the infinite Haagerup tensor product.\\
 Extending the same to ``$\min$" operator system tensor product, infinite tensor product of a set $\{\mS_i : i \in \Omega\}$ of operator systems, $\ot_{i \in \Omega} \mS_i$, can be defined in terms of inductive limit. In fact, for $\mathcal{F}=\{i_1,\cdots,i_n\}\subseteq \Omega$ , set $$\mathcal{T}_{\mathcal{F}}= \mS_{i_1} \ot_{\min} \cdots \ot_{\min} \mS_{i_n}.$$ Then if $ \mathcal{F} \subset \mathcal{G}$, $\mathcal{T}_{\mathcal{G}} \cong \mathcal{T}_{\mathcal{F}} \ot_{\min}  \mathcal{T}_{\mathcal{G}\setminus \mathcal{F}},$ so that there is a natural inclusion of $\mathcal{T}_{\mathcal{F}}$ into $\mathcal{T}_{\mathcal{G}}$ by $ t \mapsto t \ot 1_{\mathcal{G}\setminus \mathcal{F}}.$  This way, the collection $\mathcal{T}_{\mathcal{G}}$ forms an inductive system and $\ot_{\min} \mS_i;{i \in \Omega}$ is defined to be the inductive limit.

%==================================================
 
\begin{corollary}\label{infiniteot}  For an ascending sequence of operator systems $\{\mS_i\}$  such that for each $i$, $\mS_i \subseteq C^*_e(\mS_i)$ contains enough unitaries in $C^*_e(\mS_i)$ or each $C^*_e(\mS_i)$ is simple, we have
$$C^*_e(\ot_{\min} \mS_i)=\ot_{C^*\text{-}\min} C^*_e(\mS_i).$$
\end{corollary}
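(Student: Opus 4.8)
The plan is to present both sides as inductive limits of their finite sub-tensor-products and then quote \Cref{mainresult}. By the definition of the infinite operator system tensor product recalled above, $\ot_{\min}\mS_i = \ilim \mathcal{T}_n$, where $\mathcal{T}_n = \mS_1 \ot_{\min} \cdots \ot_{\min} \mS_n$ and the connecting maps are $t \mapsto t \ot 1_{\mS_{n+1}}$; similarly $\ot_{C^*\text{-}\min} C^*_e(\mS_i) = \ilim B_n$ with $B_n = C^*_e(\mS_1) \ot_{C^*\text{-}\min} \cdots \ot_{C^*\text{-}\min} C^*_e(\mS_n)$. Thus it suffices to prove that for every $n$ one has $C^*_e(\mathcal{T}_n) = B_n$ and that the pair $\mathcal{T}_n \subset \mathcal{T}_{n+1}$ is $C^*_e$-increasing; \Cref{mainresult} then yields $C^*_e(\ot_{\min}\mS_i) = C^*_e(\ilim \mathcal{T}_n) = \ilim C^*_e(\mathcal{T}_n) = \ilim B_n = \ot_{C^*\text{-}\min}C^*_e(\mS_i)$.

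First I would prove $C^*_e(\mathcal{T}_n) = B_n$ by induction on $n$, the engine being the two-factor identity $C^*_e(\mS \ot_{\min}\mT) = C^*_e(\mS) \ot_{C^*\text{-}\min} C^*_e(\mT)$ established in the proof of \Cref{finiteot} via \cite[Lemma 2.14]{embopsys}. The case $n = 1$ is immediate, and in the step I would write $\mathcal{T}_{n+1} = \mathcal{T}_n \ot_{\min}\mS_{n+1}$ and apply the two-factor identity to obtain $C^*_e(\mathcal{T}_{n+1}) = C^*_e(\mathcal{T}_n) \ot_{C^*\text{-}\min} C^*_e(\mS_{n+1}) = B_n \ot_{C^*\text{-}\min} C^*_e(\mS_{n+1}) = B_{n+1}$. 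The $C^*_e$-increasing property of $\mathcal{T}_n \subset \mathcal{T}_{n+1}$ is then visible from the canonical inclusion $B_n \hookrightarrow B_{n+1}$, $a \mapsto a \ot 1_{C^*_e(\mS_{n+1})}$, which is compatible with $t \mapsto t \ot 1_{\mS_{n+1}}$.

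The main obstacle is that the two-factor identity can be applied at the inductive step only if $\mathcal{T}_n$ itself satisfies one of the two standing hypotheses, so the real content is showing that these hypotheses are stable under $\ot_{\min}$. In the enough-unitaries case, if the unitaries of each $\mS_j$ generate $C^*_e(\mS_j)$, then the elementary tensors $u_1 \ot \cdots \ot u_n$ of such unitaries are unitaries lying inside $\mathcal{T}_n$, and their products span a dense $*$-subalgebra of $B_n$; hence $\mathcal{T}_n$ contains enough unitaries of $B_n = C^*_e(\mathcal{T}_n)$, which closes the induction and supplies the hypothesis needed at the next step. In the simple case, one invokes that the minimal tensor product of simple $C^*$-algebras is simple, so each $B_n = C^*_e(\mathcal{T}_n)$ is simple; this makes every unital $*$-homomorphism out of $C^*_e(\mathcal{T}_n)$ injective, which is exactly what both the two-factor identity and the $C^*_e$-increasing property require.

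With $C^*_e(\mathcal{T}_n) = B_n$ and the $C^*_e$-increasing inclusions in hand, the final step is a direct appeal to \Cref{mainresult} applied to the sequence $\{\mathcal{T}_n\}$, followed by re-identifying the two resulting inductive limits with $\ot_{\min}\mS_i$ and $\ot_{C^*\text{-}\min}C^*_e(\mS_i)$ through their definitions. I expect the only delicate point to be the hypothesis-stability bookkeeping described above; everything else is formal once the finite-stage identity is established.
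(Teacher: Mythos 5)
Your proposal is correct and takes essentially the same route as the paper: both present $\ot_{\min}\mS_i$ and $\ot_{C^*\text{-}\min}C^*_e(\mS_i)$ as inductive limits of the finite stages $\mathcal{T}_n$ and $B_n$, identify $C^*_e(\mathcal{T}_n)=B_n$, and then conclude via \Cref{mainresult}. The only divergence is at the finite stage, where you induct on the two-factor identity of \cite[Lemma 2.14]{embopsys} and explicitly verify that both standing hypotheses (enough unitaries, simplicity) are stable under $\ot_{\min}$, whereas the paper invokes enough unitaries together with \Cref{opsysenv}(iv) in one stroke and leaves the simple case implicit --- so your write-up is, if anything, the more complete of the two.
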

\begin{proof} For the finite case $\mS_{i_1} \ot_{\min} \cdots \ot_{\min} \mS_{i_n}$  has enough unitaries of $C^*_e(\mS_{i_1} \ot_{\min} \cdots \ot_{\min} \mS_{i_n})$ and therefore by \Cref{opsysenv}(iv) upto $*$-isomorphism that fixes $\mS_{i_1} \ot_{\min} \cdots \ot_{\min} \mS_{i_n}$;
$$C^*_e(\mS_{i_1} \ot_{\min} \cdots \ot_{min} \mS_{i_n})= C^*_e(\mS_{i_1}) \ot_{C^*\text{-}{\min}} \cdots \ot_{C^*\text{-}{\min}} C^*_e(\mS_{i_n}).$$ Thus, using \Cref{mainresult}, for the infinite tensor product we have:
\begin{eqnarray} C^*_e(\ot_{\min} \mS_i) & = & C^*_e(\ilim  \mS_{i_1} \ot_{\min} \cdots \ot_{\min} \mS_{i_n})\nonumber \\
& =& \ilim C^*_e(\mS_{i_1} \ot_{\min} \cdots \ot_{\min} \mS_{i_n}) \nonumber \\ & = & \ilim C^*_e(\mS_{i_1}) \ot_{C^*\text{-}{\min}} \cdots \ot_{C^*\text{-}{\min}} C^*_e(\mS_{i_n})\nonumber \\ & = &
\ot_{C^*\text{-}{\min}} C^*_e(\mS_i) \nonumber
\end{eqnarray}
\end{proof}

%==================================================

Following is an immediate generalization of Corollary 3.5 of \cite{embopsys} to infinite tensor product using the $C^*$-isomorphism
$\bigotimes_{C^*\text{-}\min} \mathcal{O}_2  \cong \mathcal{O}_2$ (\cite{rordam}). We give a short proof for the sake of completeness:

\begin{corollary}\label{corone} Let $\{\mT_i\}$; $i \in \mathbb{N}$ be a collection of operator systems with separable $C^*$-envelopes. If, for each $i=1,2,\cdots$, $C^*_e(\mT_i)$ is exact, then the operator system $ \ot_{\min} \mT_i$ embeds into $\mO_2$. Converse holds, if either $C^*_e(\mT_i)$ is simple for all $i=1,2,\cdots$ or each $\mT_i$, $i=1,2,\cdots$, contains enough unitaries of $C^*_e(\mT_i)$, respectively.
\end{corollary}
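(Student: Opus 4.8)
The plan is to prove the two implications separately. The forward direction will rest on the functoriality (injectivity) of the operator system $\min$ tensor product together with the $C^*$-isomorphism $\bigotimes_{C^*\text{-}\min}\mO_2\cong\mO_2$, while the converse will rest on the $C^*$-envelope computation of \Cref{infiniteot}. A preliminary observation used in both halves is separability: since $\mT_i\subseteq C^*_e(\mT_i)$ and each $C^*_e(\mT_i)$ is separable, each $\mT_i$ is separable; the $\min$ tensor product of finitely many separable operator systems is separable, and a countable operator system inductive limit of separable objects is separable. Hence $\ot_{\min}\mT_i$ is separable and \cite[Theorem 3.1]{embopsys} applies to it.

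For the forward direction, since each $C^*_e(\mT_i)$ is separable and exact, \cite[Theorem 3.1]{embopsys} yields unital complete order embeddings $\iota_i:\mT_i\hookrightarrow\mO_2$. By injectivity of the $\min$ operator system tensor product, for every finite $\mathcal{F}=\{i_1,\dots,i_n\}$ the map $\iota_{i_1}\ot\cdots\ot\iota_{i_n}$ is a complete order embedding $\mathcal{T}_{\mathcal{F}}\hookrightarrow\mO_2\ot_{\min}\cdots\ot_{\min}\mO_2$; and since the $\min$ operator system tensor product of $C^*$-algebras coincides with the operator system underlying their $C^*$-minimal tensor product, the target sits inside $\mO_2\ot_{C^*\text{-}\min}\cdots\ot_{C^*\text{-}\min}\mO_2$. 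Because $\bigotimes_{C^*\text{-}\min}\mO_2\cong\mO_2$ is itself realised as the $C^*$-inductive limit of these finite tensor products with connecting maps $z\mapsto z\ot 1$, and the $\iota_i$ are unital, the resulting embeddings $\mathcal{T}_{\{1,\dots,n\}}\hookrightarrow\mO_2$ are compatible with the connecting maps $t\mapsto t\ot 1$ of the inductive system defining $\ot_{\min}\mT_i$. The universal property then produces a unital completely positive map $\Phi:\ot_{\min}\mT_i\to\mO_2$ that restricts to a complete order embedding on each $\lambda_n(\mathcal{T}_{\{1,\dots,n\}})$.

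The step I expect to be the main obstacle is upgrading $\Phi$ from a complete order embedding on the dense union $\bigcup_n\lambda_n(\mathcal{T}_{\{1,\dots,n\}})$ to a complete order embedding of all of $\ot_{\min}\mT_i$. Here the cone structure of the operator system inductive limit is essential: I would use that a matrix element of the limit is positive exactly when it is an Archimedean limit of images of positive elements from the finite stages. Thus if $\Phi^{(k)}(x)\ge 0$ and $\mu_n(y_n)\to x$, then $\Phi_n^{(k)}(y_n)\to\Phi^{(k)}(x)\ge 0$ forces $y_n+\ep 1\ge 0$ at large stages for every $\ep>0$, whence $x+\ep 1\ge 0$, and finally $x\ge 0$ by the Archimedean property. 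This shows $\Phi$ is a complete order embedding, so $\ot_{\min}\mT_i$ embeds into $\mO_2$.

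For the converse, assume in addition that either every $C^*_e(\mT_i)$ is simple or every $\mT_i$ contains enough unitaries of $C^*_e(\mT_i)$. Then \Cref{infiniteot} gives $C^*_e(\ot_{\min}\mT_i)=\ot_{C^*\text{-}\min}C^*_e(\mT_i)$. Since $\ot_{\min}\mT_i$ is separable and, by hypothesis, embeds into $\mO_2$, \cite[Theorem 3.1]{embopsys} shows that its $C^*$-envelope $\ot_{C^*\text{-}\min}C^*_e(\mT_i)$ is exact. Finally, each $C^*_e(\mT_i)$ embeds as a unital $C^*$-subalgebra of this infinite $C^*$-minimal tensor product via $a\mapsto a\ot 1\ot\cdots$, and exactness is inherited by $C^*$-subalgebras; hence every $C^*_e(\mT_i)$ is exact, which completes the proof.
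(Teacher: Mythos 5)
Your proof is correct and follows essentially the same route as the paper's: embed each $\mT_i$ into $\mO_2$ using exactness of its separable $C^*$-envelope, push this through the $\min$ tensor products via injectivity and $\bigotimes_{C^*\text{-}\min}\mO_2\cong\mO_2$ for the forward direction, and for the converse combine \Cref{infiniteot} with the characterization of exactness via embeddability into $\mO_2$ and the fact that exactness passes to $C^*$-subalgebras. The only difference is that you make explicit the step of upgrading the finite-stage embeddings to a complete order embedding of the inductive limit (via the closed-cone/Archimedean argument), which the paper leaves implicit in its chain of inclusions.
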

\begin{proof} Let $C^*_e(\mT_i)$ be exact for $ i \in \N.$ Since $\min$ is injective and operator system $\min$ tensor product of $C^*$-algebras embeds complete order isomorphically into their $C^*\text{-}\min$ tensor product \cite[Theorem 4.6, Corollary 4.10]{KPTT1}, using $\bigotimes_{C^*\text{-}\min} \mathcal{O}_2  \cong \mathcal{O}_2$ (\cite{rordam}), we have $$ \bigotimes_{\min} \mT_i  \ \hookrightarrow \bigotimes_{C^*\text{-}\min} \mO_2 \cong \mO_2.$$
Conversely, let there be an embedding of $ \bigotimes_{\min} \mT_i$ into $\mO_2$. In case, $C^*_e(\mT_i)$ is simple for $i \in \N$ or each $\mT_i$, $i\in \N$, contains enough unitaries of $C^*_e(\mT_i)$, respectively, then using \Cref{infiniteot}, $$C^*_e(\ot_{\min} \mT_i)=\ot_{C^*\text{-}\min} C^*_e(\mT_i),$$ which is separable (being the minimal $C^*$-tensor product of separable $C^*$-algebra). Since an operator system is exact if and only if its separable $C^*$-envelope embeds into $\mathcal{O}_2$ \cite[Theorem 3.1]{embopsys}), we have $\ot_{C^*\text{-}\min} C^*_e(\mT_i)$ is exact and hence, for each $i$, the $C^*$-subalgebras $C^*_e(\mT_i)$ is exact (\cite{blackadar2}).
\end{proof}

%=================================================
%=================================================
\subsection{Applications}

\begin{proposition} For an increasing collection $\{\mathfrak{u}_i\}$, where each $\mathfrak{u}_i$ denotes a generating set of the group $G_i$ (\Cref{ss:1}), $C^*_e(\ilim \mS(\mathfrak{u}_i))=\ilim C^*_e(\mS(\mathfrak{u}_i))$ (resp. $C^*_e(\ilim \mS_r(\mathfrak{u}_i))=\ilim C^*_e(\mS_r(\mathfrak{u}_i))$.\\ In particular, for the operator system $\mS_n\subset C^*(\F_n)$ associated to free group $\F_n$ and $\mS_{\infty} \subset C^*(\F_{\infty})$, we have $C^*_e(\mS_{\infty})=C^*(\F_{\infty}).$
\end{proposition}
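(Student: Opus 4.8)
The plan is to realize this proposition as a direct application of \Cref{mainresult}, once the $C^*_e$-increasing hypothesis has been verified for the two families in question. First I would observe that since $\{\mathfrak{u}_i\}$ is an increasing collection of generating sets, the underlying groups form an ascending chain $G_1 \subset G_2 \subset \cdots$, and correspondingly the operator systems satisfy $\mS(\mathfrak{u}_i) \subset \mS(\mathfrak{u}_{i+1})$ as unital complete order embeddings. By the translation of the Brown--Ozawa inclusions recorded in \Cref{ss:1}, we have $C^*_e(\mS(\mathfrak{u}_i)) = C^*(G_i) \subset C^*(G_{i+1}) = C^*_e(\mS(\mathfrak{u}_{i+1}))$ canonically, so each consecutive pair is $C^*_e$-increasing. \Cref{mainresult} then yields $C^*_e(\ilim \mS(\mathfrak{u}_i)) = \ilim C^*_e(\mS(\mathfrak{u}_i))$ at once. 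The reduced case is identical: using $C^*_e(\mS_r(\mathfrak{u}_i)) = C^*_r(G_i)$ together with the canonical inclusions $C^*_r(G_i) \subset C^*_r(G_{i+1})$, each pair $\mS_r(\mathfrak{u}_i) \subset \mS_r(\mathfrak{u}_{i+1})$ is again $C^*_e$-increasing, and \Cref{mainresult} applies verbatim.

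For the particular statement I would specialize to the free groups $\F_n$ with their standard generating sets, writing $\F_\infty = \ilim \F_n$ under the natural inclusions $\F_n \hookrightarrow \F_{n+1}$; correspondingly $\mS_\infty = \ilim \mS_n$. Since $C^*_e(\mS_n) = C^*(\F_n)$ for every finite $n$, the formula just established gives
\[
C^*_e(\mS_\infty) = C^*_e(\ilim \mS_n) = \ilim C^*_e(\mS_n) = \ilim C^*(\F_n).
\]
It then remains to identify $\ilim C^*(\F_n)$ with $C^*(\F_\infty)$, which follows from the standard fact that the full group $C^*$-algebra functor carries an inductive limit of groups to the inductive limit of the corresponding $C^*$-algebras, i.e. $C^*(\ilim \F_n) = \ilim C^*(\F_n)$. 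Combining the two displays gives $C^*_e(\mS_\infty) = C^*(\F_\infty)$.

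The argument is almost entirely a bookkeeping application of \Cref{mainresult}, so I do not anticipate a serious obstacle; the only point requiring a little care is the compatibility of the various inductive limits. Concretely, one must check that the operator system inductive limit $\ilim \mS_n$ genuinely coincides with the operator system $\mS_\infty$ attached to $\F_\infty$, and that the identification $\ilim C^*(\F_n) = C^*(\F_\infty)$ respects the complete order embeddings so that the chain of equalities is literal rather than merely up to isomorphism. As a consistency check, one can note that the conclusion also drops out directly from the general identity $C^*_e(\mS(\mathfrak{u})) = C^*(G)$ of \Cref{ss:1} applied to $G = \F_\infty$, which confirms that the inductive-limit route produces the expected answer.
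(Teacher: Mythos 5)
Your proposal is correct and follows essentially the same route as the paper: verify that each pair $\mS(\mathfrak{u}_i) \subset \mS(\mathfrak{u}_{i+1})$ (resp.\ $\mS_r(\mathfrak{u}_i) \subset \mS_r(\mathfrak{u}_{i+1})$) is $C^*_e$-increasing via the canonical inclusions $C^*(G_i) \subset C^*(G_{i+1})$ and $C^*_r(G_i) \subset C^*_r(G_{i+1})$, apply \Cref{mainresult}, and then identify $\ilim C^*(\F_n)$ with $C^*(\F_\infty)$ for the free-group case. Your treatment is in fact slightly more careful than the paper's, which passes over the identification $\ilim C^*(\F_n) = C^*(\F_\infty)$ and the compatibility of the inductive limits without comment.
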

\begin{proof}
It follows directly by using the fact that whenever $G_i \subseteq G_{i+1}$, $C^*(G_i) \subseteq C^*(G_{i+1})$ and $C^*_r(G_i) \subseteq C^*_r(G_{i+1})$ in \Cref{mainresult}.\\
Also, as the sequence of $\mS_n \subset C^*(\mathbb{F}_n)$ of operator subsystems of full group $C^*$-algebra generated by free group with $n$-generators increases to the operator subsystem $\mS_{\infty} \subset C^*(\F_{\infty}$, therefore we have $C^*_e(\mS_{\infty})=C^*(\F_{\infty}).$
\end{proof}

%=================================================

 \Cref{mainresult} gives an easy proof of the following known result from \cite{zheng}:
\begin{proposition} For the Cuntz operator system of infinite degree, $C^*_e(\mS_{\infty}) = \mO_{\infty}.$
\end{proposition}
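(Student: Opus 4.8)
The plan is to read the conclusion directly off the simplicity of $\mO_\infty$, rather than to push everything through the inductive limit. The only two ingredients I would need are that $\mS_\infty$ is a concrete operator system inside $\mO_\infty$ which generates it as a $C^*$-algebra (it contains every generator $s_1,s_2,\dots$), and that $\mO_\infty$ is a simple $C^*$-algebra. Granting these, the argument is exactly the one that already produces $C^*_e(\mS_n)=\mO_n$ in finite degree, now applied with $n=\infty$.

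First I would note that the inclusion $\iota:\mS_\infty \ra \mO_\infty$ is a unital complete order embedding whose range generates $\mO_\infty$, so $(\mO_\infty,\iota)$ is a $C^*$-cover of $\mS_\infty$ in the sense recalled in \Cref{s:1}. The universal ``minimality'' property of the $C^*$-envelope then furnishes a unique surjective unital $*$-homomorphism $\pi:\mO_\infty \ra C^*_e(\mS_\infty)$ fixing $\mS_\infty$. Being unital, $\pi$ is nonzero, so $\ker\pi$ is a proper closed two-sided ideal of $\mO_\infty$; simplicity of $\mO_\infty$ then forces $\ker\pi=\{0\}$, whence $\pi$ is a $*$-isomorphism and $C^*_e(\mS_\infty)=\mO_\infty$. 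This completes the proof and needs nothing beyond the minimality property and the known simplicity of $\mO_\infty$.

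One can also try to derive the statement from \Cref{mainresult} by realising $\mS_\infty=\ilim \mS_n$ as the operator system inductive limit of the ascending Cuntz operator systems, so that $C^*_e(\mS_\infty)=\ilim C^*_e(\mS_n)=\ilim \mO_n$, and then identifying this limit with $\mO_\infty$. I expect the real work in that route to concentrate entirely in the final identification, namely in checking that the connecting embeddings $\mS_n\subset\mS_{n+1}$ are genuinely $C^*_e$-increasing and that the limit of the envelopes returns $\mO_\infty$; this is the step I would scrutinise most carefully, and it is precisely the point that the simplicity argument above avoids. For that reason I would present the simplicity proof as the primary and self-contained one, treating the inductive-limit perspective as motivation rather than as the engine of the proof.
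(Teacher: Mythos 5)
Your simplicity argument is correct and complete: $\mS_\infty$ contains every generator of $\mO_\infty$, so $(\mO_\infty,\iota)$ is a $C^*$-cover; Hamana's minimality property gives a surjective unital $*$-homomorphism $\pi:\mO_\infty\ra C^*_e(\mS_\infty)$ fixing $\mS_\infty$; and since $\pi$ is unital its kernel is a proper closed two-sided ideal, which simplicity of $\mO_\infty$ (Cuntz's classical theorem) forces to be zero. This is genuinely different from the paper's proof, which realises $\mS_\infty=\ilim\mS_n$ and invokes \Cref{mainresult} to write $C^*_e(\mS_\infty)=\ilim C^*_e(\mS_n)=\ilim\mO_n=\mO_\infty$. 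What your route buys is robustness and economy: it needs nothing beyond the minimality property and one classical fact, and it is the same argument that yields $C^*_e(\mS_n)=\mO_n$ in finite degree.

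Moreover, the step of the inductive-limit route that you singled out for scrutiny is not merely delicate; it fails. Inside $\mO_{n+1}$ one has $\sum_{i=1}^{n}s_is_i^*=1-s_{n+1}s_{n+1}^*\neq 1$, so by Cuntz's uniqueness theorem the $C^*$-algebra generated by $s_1,\dots,s_n$ in $C^*_e(\mS_{n+1})=\mO_{n+1}$ is the Cuntz--Toeplitz algebra $\mathcal{E}_n$, which is not simple; hence no $*$-homomorphism $\mO_n\ra\mO_{n+1}$ can extend the inclusion $\mS_n\subset\mS_{n+1}$ (such a map would be injective by simplicity of $\mO_n$ and would have image $\mathcal{E}_n$, a contradiction). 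K-theory excludes even abstract unital embeddings: a unital $*$-homomorphism $\mO_n\ra\mO_m$ forces $(n-1)[1]=0$ in $K_0(\mO_m)$, which is impossible for $2\leq n<m\leq\infty$; and $\ilim\mO_n$ can never be $\mO_\infty$, because $K_0(\mO_\infty)\cong\Z$ is torsion-free while, by continuity of $K_0$, any inductive limit of the $\mO_n$ has torsion $K_0$. So the Cuntz pairs $\mS_n\subset\mS_{n+1}$ are not $C^*_e$-increasing in the sense required by \Cref{mainresult}, the paper's own proof of this proposition has a genuine gap exactly where you predicted, and your direct argument is the one that should stand as the proof.
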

\begin{proof} By considering the inductive sequence $\{\mS_n\}_{n=1}^{\infty}$ of Cuntz operator system, since each pair is $C^*_e$-increasing, by \Cref{mainresult}, $C^*_e(\mS_{\infty})=C^*_e(\ilim \mS_n) = \ilim C^*_e(\mS_n)=\ilim \mO_n=\mO_{\infty}.$
\end{proof}

%==================================================

\begin{proposition} For an increasing sequence of graphs as in \Cref{ss:1} , we have $C^*_e(\ilim \mS_{G_i})=C^*(\ilim \mS_{G_i}) \subset B$, where $B$ is an AF $C^*$-algebra.
\end{proposition}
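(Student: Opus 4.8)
The plan is to obtain this proposition as a direct application of \Cref{mainresult}, the only content beyond that result being the finite dimensionality of the graph $C^*$-envelopes. First I would recall the two facts about graph operator systems from \Cref{ss:1}: for any finite graph $G$ on $n$ vertices, Ortiz and Paulsen showed $C^*_e(\mS_G)=C^*(\mS_G)\subseteq M_n(\C)$, and for $G_i\subset G_{i+1}$ the complete order inclusion $\mS_{G_i}\subset\mS_{G_{i+1}}$ extends canonically to $C^*_e(\mS_{G_i})\subset C^*_e(\mS_{G_{i+1}})$. Hence each consecutive pair in the increasing sequence $\{\mS_{G_i}\}$ is $C^*_e$-increasing, which is precisely the hypothesis of \Cref{mainresult}.

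Applying \Cref{mainresult} to $\{\mS_{G_i}\}$ gives $C^*_e(\ilim\mS_{G_i})=\ilim C^*_e(\mS_{G_i})$. Moreover, the proof of \Cref{mainresult} identifies the $C^*$-envelope of the limit with the $C^*$-algebra $C^*(i(\mS))$ generated by the image of $\mS=\ilim\mS_{G_i}$; since that image is $\overline{\bigcup_i C^*(\lambda_i(\mS_{G_i}))}$, this is exactly $C^*(\ilim\mS_{G_i})$, yielding the first asserted equality
$$C^*_e(\ilim\mS_{G_i})=C^*(\ilim\mS_{G_i})=\ilim C^*(\mS_{G_i}).$$

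For the AF statement I would use that each $C^*_e(\mS_{G_i})=C^*(\mS_{G_i})$ is finite dimensional, being a unital $C^*$-subalgebra of $M_{n_i}(\C)$, and that the connecting maps $\widetilde{u_i}:C^*_e(\mS_{G_i})\ra C^*_e(\mS_{G_{i+1}})$ furnished by the $C^*_e$-increasing property are injective unital $*$-homomorphisms. Thus $\{C^*_e(\mS_{G_i})\}$ is a genuine inductive sequence of finite dimensional $C^*$-algebras, and by \Cref{Cindlim}(iv) its limit $B:=\ilim C^*_e(\mS_{G_i})=C^*_e(\ilim\mS_{G_i})$ is an AF-algebra; this is the algebra $B$ appearing in the statement.

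I do not expect any serious obstacle here, since the proposition is essentially a corollary of \Cref{mainresult}. The one point deserving care is the bookkeeping for the ambient matrix algebras when the vertex sets grow: a corner embedding $M_{n_i}(\C)\hookrightarrow M_{n_{i+1}}(\C)$ fails to be unital, so one cannot take $B$ to be a naive inductive limit of the full matrix algebras. The correct choice is the inductive limit of the envelopes $C^*(\mS_{G_i})$ themselves, whose connecting maps are the canonical unital extensions guaranteed by the $C^*_e$-increasing property; verifying that these are indeed injective and unital is all that is required for \Cref{Cindlim}(iv) to apply and deliver the AF conclusion.
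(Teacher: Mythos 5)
Your proposal is correct, and the main step is the same as the paper's: verify via Ortiz--Paulsen that each pair $\mS_{G_i}\subset\mS_{G_{i+1}}$ is $C^*_e$-increasing and invoke \Cref{mainresult} to get $C^*_e(\ilim\mS_{G_i})=\ilim C^*_e(\mS_{G_i})=\ilim C^*(\mS_{G_i})$. Where you diverge is the AF part. The paper takes $B=\ilim M_{n_i}$, the inductive limit of the \emph{ambient} full matrix algebras (noting these ascend along with the graphs), and concludes only that the envelope of the limit sits inside this AF algebra. You instead take $B=\ilim C^*(\mS_{G_i})$, i.e.\ the envelope itself, and apply \Cref{Cindlim}(iv) to the finite-dimensional algebras $C^*(\mS_{G_i})\subseteq M_{n_i}(\C)$. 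Your route yields a strictly stronger conclusion --- $C^*_e(\ilim\mS_{G_i})$ is itself AF, so the inclusion into $B$ is an equality --- whereas the paper's statement, read literally, only asserts an embedding into some AF algebra. Your closing caveat about corner embeddings is also well taken, and in fact it cuts against the paper's choice rather than yours: when the vertex sets grow, the embeddings $M_{n_i}(\C)\hookrightarrow M_{n_{i+1}}(\C)$ are non-unital corner maps, so the paper's $B=\ilim M_{n_i}$ requires exactly the bookkeeping you describe, while your $B$ uses only the canonical unital injective connecting maps $\widetilde{u_i}$ supplied by the $C^*_e$-increasing hypothesis. (One small remark: for AF-ness via \Cref{Cindlim}(iv) you do not actually need injectivity or unitality of the connecting maps --- any countable inductive limit of finite-dimensional $C^*$-algebras qualifies --- so that verification, while harmless, is not required.)
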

\begin{proof}
Since $C^*_e(\mS_{G_i})=C^*(\mS_{G_i}) \subset M_{n_i}$ (\cite[Thereom 3.2]{ortiz}) and $G_i$'s are increasing, we have $\{M_{n_i}\}$ are also ascending. So that by \Cref{mainresult}, we have $C^*_e(\ilim \mS_{G_i})=\ilim C^*_e(\mS_{G_i})=\ilim C^*(\mS_{G_i}) \subset \ilim  M_{n_i}.$ Using $B= \ilim  M_{n_i}$ which is an AF-algebra, we are done.
\end{proof}

%=================================================

 \begin{remarks}Note that $B$ being nuclear is exact $C^*$-algebra and hence $C^*_e(\ilim \mS_{G_i})$ embeds into $\mO_2$.
 \end{remarks}

%==================================================
%==================================================
%==================================================

\section{Nuclearity Properties of inductive limits in Operator System Category}\label{s:3}
%\subsubsection{Inductive limit and tensor product of operator system}
In this section, we study various notions of nuclearity of operator systems.\\
Using injectivity of minimal operator system tensor product (\cite[Theorem 4.6]{KPTT1} and the fact that the maximal commuting operator system tensor product $\mathrm{c}$ is induced by the $\max$ tensor product of universal $C^*$-covers (\cite[Theorem 6.4]{KPTT1}), we have the next proposition:

%===================================================

\begin{proposition}\label{c} For an inductive sequence $\{\mS_n\}_{n=1}^{\infty}$ with $u_n: \mS_n \ra \mS_{n+1}\; \;\forall n\in \N$ as the connecting complete order isomorphisms and any operator system $\mT$, the minimal operator system tensor product $\min$ commutes with the inductive limit, that is, $$\ilim (\mS_n \ot_{\min} \mT )= (\ilim \mS_n) \ot_{\min} \mT.$$ The maximal commuting operator system tensor product $\mathrm{c}$ also commutes with the inductive limit,
$$\ilim (\mS_n \ot_c \mT) = (\ilim \mS_n) \ot_c \mT;$$ provided $\{\mS_n \ot_{\mathrm{c}} \mT\}_{n=1}^{\infty}$ is an inductive system with the complete order isomorphisms $\{u_n \ot Id_{\mT}\}_{n=1}^{\infty}$ as the connecting maps.
\end{proposition}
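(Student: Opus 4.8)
The plan is to handle the two tensor products separately: the $\min$ statement follows from injectivity, while the $\mathrm{c}$ statement is reduced to a computation inside the universal $C^*$-covers. Throughout I write $(\mS,\{\lambda_n\})$ for the inductive limit of $\{\mS_n\}$, so that $\mS=\overline{\bigcup_n \lambda_n(\mS_n)}$ and $\lambda_n=\lambda_{n+1}\circ u_n$, and I recall that an operator system with compatible unital complete order embeddings from the terms whose union is dense is, by the concrete realization above, the inductive limit up to unital complete order isomorphism.

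For the $\min$ assertion I would first observe that, since $\min$ is injective (\cite[Theorem 4.6]{KPTT1}) and each $u_n$, $\lambda_n$ is a unital complete order isomorphism onto its image, the amplifications $u_n\ot Id_{\mT}$ and $\lambda_n\ot Id_{\mT}$ are again unital complete order embeddings. Hence $\{\mS_n\ot_{\min}\mT,\,u_n\ot Id_{\mT}\}$ is a genuine inductive system and the family $\lambda_n\ot Id_{\mT}:\mS_n\ot_{\min}\mT\ra\mS\ot_{\min}\mT$ is compatible with the connecting maps. It then remains to check that $\overline{\bigcup_n(\lambda_n\ot Id_{\mT})(\mS_n\ot_{\min}\mT)}=\mS\ot_{\min}\mT$; this density holds because $\min$ is spatial, so an elementary tensor $s\ot t$ with $s\in\mS$ is approximated in the $\min$ norm by elements of $\bigcup_n\lambda_n(\mS_n)\odot\mT$ using $\mS=\overline{\bigcup_n\lambda_n(\mS_n)}$. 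Because injectivity makes the $\lambda_n\ot Id_{\mT}$ complete order embeddings at every matrix level, the inherited matrix order on the dense union is exactly the limit order, and the concrete realization identifies $(\mS\ot_{\min}\mT,\{\lambda_n\ot Id_{\mT}\})$ as $\ilim(\mS_n\ot_{\min}\mT)$.

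For the $\mathrm{c}$ assertion I would pass to universal $C^*$-covers. By \cite[Theorem 6.4]{KPTT1} the commuting tensor product is realized as the operator subsystem $\mS_n\ot_c\mT\subseteq C^*_u(\mS_n)\ot_{\max}C^*_u(\mT)$, and likewise $\mS\ot_c\mT\subseteq C^*_u(\mS)\ot_{\max}C^*_u(\mT)$, the matrix order in each case being inherited from the ambient $C^*$-algebra. Using \Cref{univindlim}, $C^*_u(\mS)=\ilim C^*_u(\mS_n)$. The key intermediate fact I would establish is that $\ot_{\max}$ commutes with $C^*$-algebraic inductive limits, namely
$$\left(\ilim C^*_u(\mS_n)\right)\ot_{\max}C^*_u(\mT)=\ilim\left(C^*_u(\mS_n)\ot_{\max}C^*_u(\mT)\right),$$
which I would deduce from the universal property of $\ot_{\max}$ (pairs of commuting $*$-homomorphisms) composed with the universal property of the $C^*$-inductive limit, so that both sides corepresent the same functor. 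Granting this, both $\mS\ot_c\mT$ and $\ilim(\mS_n\ot_c\mT)$ sit as operator subsystems of the single $C^*$-algebra $C:=C^*_u(\mS)\ot_{\max}C^*_u(\mT)$, their defining images are compatible, and the union $\bigcup_n\lambda_n(\mS_n)\odot\mT$ is dense in $\mS\ot_c\mT$ by density of $\bigcup_n\lambda_n(\mS_n)$ in $\mS$.

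The main obstacle is the matrix-order bookkeeping in this last step. Since $\mathrm{c}$ is not injective, the hypothesis that $\{\mS_n\ot_c\mT,\,u_n\ot Id_{\mT}\}$ is an inductive system with complete order embeddings is exactly what is needed to make the argument go through, and it is the stated proviso. Under it, the delicate point is to verify that the abstract operator system inductive limit structure on $\ilim(\mS_n\ot_c\mT)$ coincides with the concrete structure inherited from $C$ — equivalently, that forming the operator system inductive limit of subsystems lying inside an inductive limit of $C^*$-algebras returns the subsystem determined by the closed union with its inherited matrix cones. Once this identification and the continuity of $\ot_{\max}$ are in hand, the identity map between $\ilim(\mS_n\ot_c\mT)$ and $(\ilim\mS_n)\ot_c\mT$ is a unital complete order isomorphism, which is the claim.
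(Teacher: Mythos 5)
Your argument for the commuting tensor product $\mathrm{c}$ is essentially the paper's own proof: you realize $\mS_n \ot_c \mT$ inside $C^*_u(\mS_n)\ot_{\max}C^*_u(\mT)$ via \cite[Theorem 6.4]{KPTT1}, invoke the commutation of the maximal $C^*$-tensor product with $C^*$-inductive limits (the paper simply cites \cite[II.9.6.5]{blackadar2}, whereas you re-derive it from the universal properties -- a harmless substitution), identify $\ilim C^*_u(\mS_n)$ with $C^*_u(\ilim \mS_n)$ via \Cref{univindlim}, and then match $\ilim(\mS_n\ot_c\mT)$ and $(\ilim\mS_n)\ot_c\mT$ as subsystems of the same ambient $C^*$-algebra sharing a dense union. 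The only genuine divergence is in the $\min$ half: the paper again routes through the universal covers and the $C^*$-$\min$ tensor product of their inductive limits, whereas you argue directly that injectivity of $\min$ makes the maps $\lambda_n\ot Id_{\mT}$ compatible complete order embeddings into $(\ilim\mS_n)\ot_{\min}\mT$ with dense union, so the concrete-realization characterization of the operator system inductive limit applies; this is more elementary and avoids \Cref{univindlim} for that half, at the cost of having to verify the realization criterion itself. Note that both you and the paper rest on the same unproved bookkeeping step -- that an operator system equipped with compatible complete order embeddings of the terms and dense union \emph{is} the inductive limit (which holds because a positive element of the closure is a limit of elements of the union that become positive after adding small multiples of the unit, at every matrix level) -- and your write-up at least flags this point explicitly, so your proposal is sound to the same degree as the paper's own proof.
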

\begin{proof}
First we give the proof for $\mathrm{c}$ assuming that $\{\mS_n \ot_{\mathrm{c}} \mT\}_{n=1}^{\infty}$ is an inductive system.
The maximal $C^*$-tensor product commutes with arbitrary inductive limits \cite[II.9.6.5]{blackadar2}, so that
 \begin{equation}\label{cmax}
\ilim( C^*_u(\mS_n) \ot_{C^*\text{-}\max} C^*_u(\mT)) = (\ilim C^*_u(\mS_n)) \ot_{C^*\text{-}\max} C^*_u(\mT),
\end{equation}  for the inductive sequence $\{\mS_n\}_{n=1}^{\infty}$ and operator system $\mT$. Also, by \cite[Theorem 6.4]{KPTT1}, the operator system tensor product $\mathrm{c}$ is naturally induced by the maximal operator system tensor product of  universal $C^*$-covers, that is for each $n$, we have $$\mS_n \ot_c \mT \displaystyle \subset C^*_u(\mS_n) \ot_{max} C^*_u(\mT).$$  Therefore, we have the following commutative diagram, 
\[\begin{tikzcd}
  \mS_1 \ot_{\mathrm{c}} \mT \arrow[hookrightarrow]{d}{} \arrow{r}{u_{1} \ot Id_{\mT}} & \mS_2 \ot_{\mathrm{c}} \mT \arrow[hookrightarrow]{d}{} \arrow{r}{u_{2} \ot Id_{\mT}} &  \cdots \arrow{r} & \ilim (\mS_n \ot_{\mathrm{c}} \mT) \arrow[hookrightarrow]{d}{} \\ C^*_u(\mS_1) \ot_{\mathrm{max}} C^*_u(\mT) \arrow{r}{\widetilde{u_1} \ot Id_{C^*_u(\mT)} } & C^*_u(\mS_2) \ot_{\mathrm{max}} C^*_u(\mT) \arrow{r}{\widetilde{u_2} \ot Id_{C^*_u(\mT)} } &  \cdots  \arrow{r} & \displaystyle\ilim(C^*_u(\mS_n) \ot_{\mathrm{max}} C^*_u(\mT))
\end{tikzcd}\]
which further implies, 
\begin{eqnarray}\label{cind}
 \ilim (\mS_n \ot_{\mathrm{c}} \mT)
  & \subset & \ilim (C^*_u(\mS_n) \ot_{\max} C^*_u(\mT))  \nonumber \\
& \subset & \ilim (C^*_u(\mS_n) \ot_{C^*\text{-}\max} C^*_u(\mT)) \text{  (\cite[Theorem 5.12]{KPTT1}}) \nonumber \\ 
& = &  (\ilim C^*_u(\mS_n) ) \ot_{C^*\text{-}\max} C^*_u(\mT) \text{   (\Cref{cmax}) } \nonumber \\
& = & C^*_u(\ilim \mS_n) \ot_{C^*\text{-}\max} C^*_u(\mT)  \text{   (\Cref{univindlim})} \nonumber
\end{eqnarray}
Therefore, using $(\ilim \mS_n )\ot_{\mathrm{c}} \mT \subset C^*_u(\ilim \mS_n) \ot_{\max} C^*_u(\mT)$, the complete order inclusion map above, $\ilim (\mS_n \ot_{\mathrm{c}} \mT) \rightarrow C^*_u(\ilim \mS_n) \ot_{C^*\text{-}\max} C^*_u(\mT) $  induces the complete order isomorphism  $ \ilim (\mS_n \ot_{\mathrm{c}} \mT) = (\ilim \mS_n) \ot_c \mT.$ \\
Similarly, since $\min$ is injective, $$\ilim (\mS_n \ot_{\min} \mT) \subset \ilim (C^*_u(\mS_n) \ot_{\min} C^*_u(\mT)) \subset \ilim (C^*_u(\mS_n) \ot_{C^*\text{-}\min} C^*_u(\mT)).$$ But from  \cite[II.9.6.5]{blackadar2}, $$\ilim (C^*_u(\mS_n) \ot_{C^*\text{-}\min} C^*_u(\mT))=(\ilim C^*_u(\mS_n)) \ot_{C^*\text{-}\min} C^*_u(\mT)=C^*_u(\ilim \mS_n)\ot_{C^*\text{-}\min} C^*_u(\mT). $$ Again, using the injectivity of $\min$, we have $\ilim (\mS_n \ot_{\min} \mT )$ is complete order isomorphic to $(\ilim \mS_n) \ot_{\min} \mT$.
\end{proof}

%================================================
%================================================

\subsection{$(\min,\max)$-nuclearity}
It is known that the inductive limit of nuclear $C^*$-algebras (in $C^*$-algebra category) and the inductive limit of nuclear operator spaces (in operator space category) is exact. And, since by \cite[Proposition 5.5]{KPTT2} an operator system is $(\min,\max)$-nuclear (as an operator system) if and
only if it is 1-nuclear as an operator space, we have the inductive limit of $(\min,\max)$-nuclear operator systems is $(\min,\max)$-nuclear:

%==================================================
\begin{proposition}
In case $\{\mS_n\}_{n=1}^{\infty}$ is an inductive system of $(\min,\max)$-nuclear operator system,
then $\ilim \mS_n$ is also $(\min,\max)$-nuclear.
\end{proposition}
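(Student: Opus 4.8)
The plan is to pass from the operator system category to the operator space category, where $(\min,\max)$-nuclearity admits a clean characterization. By \cite[Proposition 5.5]{KPTT2}, an operator system is $(\min,\max)$-nuclear if and only if it is $1$-nuclear as an operator space; so it suffices to show that $1$-nuclearity of operator spaces is preserved under inductive limits. Recall that an operator space $V$ is $1$-nuclear precisely when $\mathrm{id}_V$ is a point-norm limit of finite-rank maps factoring as $V \to M_k \to V$ through matrix algebras, with the product of the cb-norms of the two factoring maps tending to $1$. A direct attempt via \Cref{c} stalls at this point: it is the tensor product $\mathrm{c}$, not $\max$, that is induced by the $C^*_u$-level maximal tensor product and hence commutes with inductive limits, so that route only secures $(\min,\mathrm{c})$-nuclearity of the limit, which need not coincide with $(\min,\max)$-nuclearity. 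This is what forces the operator-space detour.

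First I would record the compatibility of the two inductive limits. Writing $\mS = \ilim \mS_n$, each connecting map $u_n$ is a unital complete order isomorphism, hence a complete isometry, and likewise each $\lambda_n : \mS_n \ra \mS$ is completely isometric with $\mS = \overline{\bigcup_n \lambda_n(\mS_n)}$. Thus the operator system inductive limit coincides, as an operator space, with the operator space inductive limit, and it is legitimate to test $1$-nuclearity of $\mS$ using only its operator space structure.

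Next I would run the standard local approximation. Fix a finite set $F \subset \mS$ and $\ep, \delta > 0$. Density of $\bigcup_n \lambda_n(\mS_n)$ gives an $N$ and a finite set $F' \subset \lambda_N(\mS_N)$ with $\mathrm{dist}(x,F') < \ep/3$ for every $x \in F$. Applying $1$-nuclearity of $\mS_N \cong \lambda_N(\mS_N)$ to the finite set $\lambda_N^{-1}(F') \subset \mS_N$, I obtain maps $\phi : \mS_N \ra M_k$ and $\psi : M_k \ra \mS_N$ with $\phi$ a complete contraction, $\|\psi\|_{cb} \le 1+\delta$, and $\|\psi\phi(y) - y\| < \ep/3$ for all $y \in \lambda_N^{-1}(F')$. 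I then transport these to $\mS$: since $M_k$ is injective as an operator space (Arveson's extension theorem), the complete contraction $\phi \circ \lambda_N^{-1}$ on $\lambda_N(\mS_N)$ extends to a complete contraction $\Phi : \mS \ra M_k$ with no increase in cb-norm, while $\Psi := \lambda_N \circ \psi : M_k \ra \mS$ satisfies $\|\Psi\|_{cb} \le 1+\delta$. A triangle-inequality estimate then yields $\|\Psi\Phi(x) - x\| < \ep$ for all $x \in F$, with $\|\Phi\|_{cb}\|\Psi\|_{cb} \le 1+\delta$. As $F$, $\ep$, and $\delta$ are arbitrary, this exhibits $\mathrm{id}_{\mS}$ as a point-norm limit of matrix-factored maps whose cb-norm products tend to $1$, so $\mS$ is $1$-nuclear. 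Invoking \cite[Proposition 5.5]{KPTT2} once more gives that $\mS = \ilim \mS_n$ is $(\min,\max)$-nuclear.

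I expect the main obstacle to be maintaining the sharp constant $1$ throughout the passage to the limit, rather than merely uniformly bounded cb-norms. If the factorization norms were only bounded, the argument would deliver exactness of the limit but not full $(\min,\max)$-nuclearity; it is exactly the injectivity of the matrix algebras $M_k$ as operator spaces---allowing the map into $M_k$ to be extended from the dense subsystem $\lambda_N(\mS_N)$ to all of $\mS$ with no loss in cb-norm---that preserves the constant $1$, and hence preserves $(\min,\max)$-nuclearity rather than degrading it to mere exactness.
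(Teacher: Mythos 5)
Your proposal is correct and follows essentially the same route as the paper: both reduce $(\min,\max)$-nuclearity to $1$-nuclearity in the operator space category via \cite[Proposition 5.5]{KPTT2} and then appeal to permanence of operator space nuclearity under inductive limits. The only difference is one of detail rather than method --- the paper offers no formal proof, simply asserting that permanence as known (in a sentence that, incidentally, says ``exact'' where it means ``nuclear''), whereas you actually verify it by the density argument, the extension of the map into the injective operator space $M_k$ with no loss of cb-norm, and the triangle-inequality estimate keeping the factorization constant at $1$.
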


%=================================================
%=================================================

\subsection{$(\min,\mathrm{c})$-nuclearity} For finite dimensional operator system $(\min,\mathrm{c})$-nuclearity is the highest nuclearity that can be attained by an operator system which is not a $C^*$-algebra (\cite[Proposition 4.12]{kavnuc}). Next result proves that it is preserved under inductive limit:

%=================================================

\begin{theorem}
In case $\{\mS_n\}_{n=1}^{\infty}$ is an inductive system of $(\min,\mathrm{c})$-nuclear operator system with $u_n: \mS_n \ra \mS_{n+1}\; \;\forall n\in \N$ as the connecting complete order isomorphisms, $\ilim \mS_n$ is also $(\min,\mathrm{c})$-nuclear.
\end{theorem}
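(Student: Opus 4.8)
The plan is to reduce the claim about $(\min,\mathrm{c})$-nuclearity of the inductive limit to the two commutation facts already established in \Cref{c}. Recall that an operator system $\mS$ is $(\min,\mathrm{c})$-nuclear precisely when $\mS \ot_{\min} \mT = \mS \ot_{\mathrm{c}} \mT$ (a complete order isomorphism implemented by the identity map) for \emph{every} operator system $\mT$. So fix an arbitrary operator system $\mT$; I must show that the identity map $\mS \ot_{\min} \mT \ra \mS \ot_{\mathrm{c}} \mT$ is a complete order isomorphism, where $\mS = \ilim \mS_n$. The strategy is to sandwich this identity map between the two inductive-limit identifications of \Cref{c} and use the hypothesis that each $\mS_n$ is $(\min,\mathrm{c})$-nuclear at the finite level.

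First I would record the two identifications from \Cref{c}: we have $(\ilim \mS_n) \ot_{\min} \mT = \ilim(\mS_n \ot_{\min} \mT)$ always, and $(\ilim \mS_n) \ot_{\mathrm{c}} \mT = \ilim(\mS_n \ot_{\mathrm{c}} \mT)$ \emph{provided} $\{\mS_n \ot_{\mathrm{c}} \mT\}$ forms an inductive system with connecting maps $u_n \ot Id_{\mT}$. Since each $\mS_n$ is $(\min,\mathrm{c})$-nuclear, the identity induces a complete order isomorphism $\mS_n \ot_{\min} \mT = \mS_n \ot_{\mathrm{c}} \mT$ for each $n$; in particular the maps $u_n \ot Id_{\mT}$, which are complete order isomorphisms for the $\min$ tensor product (by injectivity of $\min$), are \emph{also} complete order isomorphisms for the $\mathrm{c}$ tensor product, since on each $\mS_n \ot \mT$ the $\min$ and $\mathrm{c}$ structures coincide. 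This verifies the proviso needed to invoke the $\mathrm{c}$-commutation, so both identifications are in force.

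Next I would observe that the finite-level isomorphisms are compatible with the connecting morphisms: for each $n$ the diagram relating $\mS_n \ot_{\min} \mT \ra \mS_{n+1} \ot_{\min} \mT$ and $\mS_n \ot_{\mathrm{c}} \mT \ra \mS_{n+1} \ot_{\mathrm{c}} \mT$ through the identity maps commutes, because all four arrows are implemented by the identity on the algebraic tensor product $\mS_n \odot \mT$. Hence the collection $\{Id : \mS_n \ot_{\min} \mT \ra \mS_n \ot_{\mathrm{c}} \mT\}$ is a morphism of inductive systems, and by the universal property of the inductive limit it induces a unique unital completely positive map $\Phi : \ilim(\mS_n \ot_{\min} \mT) \ra \ilim(\mS_n \ot_{\mathrm{c}} \mT)$, with a completely positive inverse induced the same way from the inverse finite-level isomorphisms. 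Transporting $\Phi$ through the two identifications of \Cref{c}, it becomes exactly the identity map $(\ilim \mS_n) \ot_{\min} \mT \ra (\ilim \mS_n) \ot_{\mathrm{c}} \mT$, which is therefore a complete order isomorphism. Since $\mT$ was arbitrary, $\ilim \mS_n$ is $(\min,\mathrm{c})$-nuclear.

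The step I expect to be the main obstacle is the verification of the proviso in \Cref{c} for the $\mathrm{c}$ tensor product, namely that $\{\mS_n \ot_{\mathrm{c}} \mT\}$ genuinely forms an inductive system with $\{u_n \ot Id_{\mT}\}$ as complete order isomorphisms. This is not automatic, because $\mathrm{c}$ is not an injective tensor product, so one cannot simply cite injectivity as one does for $\min$; instead one must leverage the hypothesis of $(\min,\mathrm{c})$-nuclearity at each finite stage to identify the $\mathrm{c}$-structure with the $\min$-structure on each $\mS_n \ot \mT$ and thereby transfer the inductive-system property from the $\min$ side. Once this identification is in place the remainder is the routine universal-property bookkeeping sketched above, so the conceptual weight of the argument rests on this compatibility of the finite-level nuclearity with the connecting maps.
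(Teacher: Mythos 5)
Your proposal is correct and follows essentially the same route as the paper: verify that finite-level $(\min,\mathrm{c})$-nuclearity together with injectivity of $\min$ makes $\{\mS_n \ot_{\mathrm{c}} \mT\}_{n=1}^{\infty}$ an inductive system, then chain the two identifications of \Cref{c} through the level-wise equality $\mS_n \ot_{\min} \mT = \mS_n \ot_{\mathrm{c}} \mT$. The only difference is that you spell out the universal-property bookkeeping that the paper leaves implicit in the equality $\ilim(\mS_n \ot_{\min} \mT) = \ilim(\mS_n \ot_{\mathrm{c}} \mT)$.
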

\begin{proof} Note that if each operator system $\mS_n$ in the inductive system is $(\min, \mathrm{c})$-nuclear, then using the injectivity of $\min$, the connecting maps $\{u_n \ot Id_{\mT}\}_{n=1}^{\infty}$ are complete order isomorphisms, so that $\{\mS_n \ot_{\mathrm{c}} \mT\}_{n=1}^{\infty}$ forms an inductive system for every operator system $\mT$. Thus result follows directly by \Cref{c} as:  $(\ilim \mS_n) \ot_{\min} \mT = \ilim (\mS_n \ot_{\min} \mT) =\ilim (\mS_n \ot_\mathrm{c} \mT)=(\ilim \mS_n) \ot_{\mathrm{c}} \mT $.
\end{proof}

%=================================================
%=================================================

\subsection{$(\min,\mathrm{el}$)-nuclearity}

It is known that the inductive limit of exact $C^*$-algebras (in $C^*$-algebra category) and the inductive limit of exact operator spaces (in operator space category) is exact. And, since by \cite[Proposition 5.5]{KPTT2} an operator system is exact (as an operator system if and
only if it is exact as an operator space, we have the inductive limit of exact operator systems is exact.
Further we know by \cite{kavnuc}that $(\min,\mathrm{el})$-nuclearity in the operator system category is equivalent to exactness, we have:

%================================================
\begin{theorem}
If $\mS_n$ is an inductive system of $(\min,\mathrm{el})$-nuclear operator system, $\ilim \mS_n$ is also  $(\min,\mathrm{el})$-nuclear.
\end{theorem}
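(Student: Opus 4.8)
The plan is to move the statement into the category of operator spaces, where closure of exact objects under inductive limits is already available, and then move back. Two equivalences carry the information across: by \cite{kavnuc} an operator system is $(\min,\mathrm{el})$-nuclear if and only if it is exact, and by \cite[Proposition 5.5]{KPTT2} an operator system is exact if and only if its underlying operator space is exact. So the first step is to apply both equivalences to each $\mS_n$ and record that every $\mS_n$ is an exact operator space.

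The second step is to take the inductive limit and read it off at the operator space level. The operator system inductive limit $(\mS,\{\lambda_n\})=\ilim \mS_n$ is realised as the completion of $\bigcup_{n}\lambda_n(\mS_n)$, and the connecting morphisms $u_n$ are unital complete order isomorphisms, hence complete isometries onto their ranges at the operator space level. Consequently the underlying operator space of $\ilim \mS_n$ is exactly the operator space inductive limit of the operator spaces $\mS_n$. Invoking the fact that an inductive limit of exact operator spaces is exact then gives that $\ilim \mS_n$ is exact as an operator space. Running the two equivalences in reverse---exactness as an operator space yields exactness as an operator system by \cite[Proposition 5.5]{KPTT2}, and exactness as an operator system yields $(\min,\mathrm{el})$-nuclearity by \cite{kavnuc}---completes the argument.

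The step I expect to be the main obstacle is the identification in the second paragraph: that the forgetful functor from operator systems to operator spaces carries the operator system inductive limit to the operator space inductive limit of the underlying spaces. Since the connecting maps are complete isometries this amounts to matching the two universal properties (equivalently, checking that the limiting matrix norms coincide), but it is the one place where the two categorical constructions must be reconciled, and it should be verified explicitly rather than taken for granted.
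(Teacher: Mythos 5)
Your proposal is correct and follows essentially the same route as the paper: the paper also passes through the two equivalences ($(\min,\mathrm{el})$-nuclear $\Leftrightarrow$ exact as an operator system by \cite{kavnuc}, and exact as an operator system $\Leftrightarrow$ exact as an operator space by \cite[Proposition 5.5]{KPTT2}) and then invokes closure of exact operator spaces under inductive limits. If anything, your version is more careful than the paper's, since you explicitly flag the need to identify the operator system inductive limit with the operator space inductive limit of the underlying spaces, a step the paper passes over in silence.
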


%=================================================
%=================================================

\subsection{$(\min,\mathrm{ess})$-nuclearity}
In an attempt to study nuclearity of operator systems through their $C^*$-envelopes, it was proved in \cite[Proposition 4.2.]{opsysnuc} that an operator system is  $(\min,\mathrm{ess})$-nuclear if its $C^*$-envelope is nuclear. Moreover, if an operator system contains enough
unitaries of its $C^*$-envelope, then its  $(\min,\mathrm{ess})$-nuclearity is equivalent to the nuclearity
of its $C^*$-envelope. We make use of this result to prove  $(\min,\mathrm{ess})$-nuclearity of an inductive system of  $(\min,\mathrm{ess})$-nuclear operator systems.

%=================================================

\begin{theorem}\label{mainresult2} Let $\{\mS_n\}_{n=1}^{\infty}$ be an increasing collection of operator systems such that, for each $n$, $\mS_n \subseteq C^*_e(\mS_n)$ contain enough unitaries in $C^*_e(\mS_n)$. Moreover, assume that each pair $\mS_n \subseteq \mS_{n+1}$ is $C^*_e$-increasing. If each $\mS_n$ is $(\min,\mathrm{ess})$-nuclear, then $\ilim \mS_n$ is  $(\min,\mathrm{ess})$-nuclear.
\end{theorem}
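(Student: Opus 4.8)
The plan is to reduce everything to the level of the $C^*$-envelopes, where $(\min,\mathrm{ess})$-nuclearity translates into ordinary $C^*$-algebraic nuclearity, and then exploit that $C^*_e$ commutes with the inductive limit under the standing hypotheses. First I would use the hypothesis that each $\mS_n$ contains enough unitaries of $C^*_e(\mS_n)$ together with the assumed $(\min,\mathrm{ess})$-nuclearity of $\mS_n$: by \Cref{opsysenv}(iii), these two facts are jointly equivalent to $C^*_e(\mS_n)$ being a nuclear $C^*$-algebra. Thus every $C^*$-envelope in the sequence is nuclear.

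Next I would invoke the $C^*_e$-increasing hypothesis on each pair $\mS_n \subseteq \mS_{n+1}$ to apply \Cref{mainresult}, which gives the key identification
\[
C^*_e(\ilim \mS_n) = \ilim C^*_e(\mS_n).
\]
Since $\{C^*_e(\mS_n)\}_{n=1}^{\infty}$ is now an inductive sequence of nuclear $C^*$-algebras, and the class of nuclear $C^*$-algebras is closed under inductive limits by \Cref{Cindlim}(ii), the right-hand side $\ilim C^*_e(\mS_n)$ is a nuclear $C^*$-algebra. Consequently $C^*_e(\ilim \mS_n)$ is nuclear.

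Finally I would close the argument by passing back from the $C^*$-envelope to the operator system: by the first implication in \Cref{opsysenv}(ii), an operator system whose $C^*$-envelope is nuclear is automatically $(\min,\mathrm{ess})$-nuclear, so $\ilim \mS_n$ is $(\min,\mathrm{ess})$-nuclear, as claimed. I expect no serious obstacle here, since the hypotheses are tailored precisely to feed \Cref{mainresult}; the only point worth care is that for the last step I deliberately use the \emph{``if''} direction of \Cref{opsysenv}(ii), which requires only that the $C^*$-envelope of the limit be nuclear, rather than the sharper equivalence of \Cref{opsysenv}(iii). This lets me avoid having to verify separately that $\ilim \mS_n$ itself contains enough unitaries of its $C^*$-envelope, which is not assumed at the limit level.
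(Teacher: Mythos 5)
Your proposal is correct and follows essentially the same route as the paper: convert $(\min,\mathrm{ess})$-nuclearity of each $\mS_n$ into nuclearity of $C^*_e(\mS_n)$ via the enough-unitaries hypothesis, identify $C^*_e(\ilim \mS_n)$ with $\ilim C^*_e(\mS_n)$ by \Cref{mainresult}, use closure of nuclear $C^*$-algebras under inductive limits, and pass back by the one-directional implication of \Cref{opsysenv}(ii). Your explicit remark that the final step needs only the ``nuclear envelope implies $(\min,\mathrm{ess})$-nuclear'' direction (so that no enough-unitaries condition is needed at the limit level) is a point the paper leaves implicit, but the argument is the same.
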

\begin{proof}
From \Cref{mainresult}, $\ilim C^*_e(\mS_n) =C^*_e(\ilim \mS_n)$.  Now using \cite[Proposition 4.2.]{opsysnuc},  $(\min,\mathrm{ess})$-nuclearity of $\mS_n$ implies $C^*$-nuclearity of $C^*_e(\mS_n)$, and since inductive limit of nuclear $C^*$-algebras is nuclear (\Cref{Cindlim}(ii)),  we have $\ilim C^*_e(\mS_n)$ and hence $C^*_e(\ilim \mS_n)$ is nuclear. Therefore, $ \ilim \mS_n$ is  $(\min,\mathrm{ess})$-nuclear (\cite[Proposition 4.2.]{opsysnuc}).
\end{proof}

%================================================
%=================================================

\subsection{$(\mathrm{el,c})$-nuclearity}
It is known from Theorem 7.1 and  Theorem 7.6 of \cite{KPTT2} (see also \cite[Theorem 4.7]{kavnuc}) that an operator system $\mS$ is $(\mathrm{el,c})$-nuclear if and only if it has DCEP which is true, if and only if $\mS \ot_{\min} C^*(\mathbb{F}_{\infty}) = \mS \ot_{\max} C^*(\mathbb{F}_{\infty}).$ This characterization is useful to study at $(\mathrm{el,c})$-nuclearity of inductive limit of operator systems with $(\mathrm{el,c})$-nuclearity:

%===============================================

\begin{theorem}
Let $\{\mS_n\}_{n=1}^{\infty}$ be an inductive sequence of $(\mathrm{el},\mathrm{c})$-nuclear operator systems (equivalently, with DCEP), then $\ilim \mS_n$ is also $(\mathrm{el},\mathrm{c})$-nuclear (equivalently, has DCEP).
\end{theorem}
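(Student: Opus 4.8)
The plan is to test $(\mathrm{el},\mathrm{c})$-nuclearity against the single operator system $C^*(\F_{\infty})$, exploiting the characterization recalled above: an operator system has DCEP (equivalently, is $(\mathrm{el},\mathrm{c})$-nuclear) if and only if its $\min$ and $\max$ tensor products with $C^*(\F_{\infty})$ coincide. Since $C^*(\F_{\infty})$ is a unital $C^*$-algebra, the commuting and the maximal operator system tensor products agree on tensoring with it, so that this criterion may be read off equally well with $\mathrm{c}$ in place of $\max$ (\cite{KPTT1}). The whole problem is thereby reduced to producing the single equality $(\ilim \mS_n)\ot_{\min}C^*(\F_{\infty})=(\ilim \mS_n)\ot_{\mathrm{c}}C^*(\F_{\infty})$, which is precisely the kind of statement governed by \Cref{c}.

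First I would fix the test object $C^*(\F_{\infty})$ and record the consequence of $(\mathrm{el},\mathrm{c})$-nuclearity at each level: by the characterization together with the lattice ordering $\min\le\mathrm{c}\le\max$, the collapse $\mS_n\ot_{\min}C^*(\F_{\infty})=\mS_n\ot_{\max}C^*(\F_{\infty})$ forces every intermediate tensor product to coincide, in particular
\[
\mS_n\ot_{\min}C^*(\F_{\infty})=\mS_n\ot_{\mathrm{c}}C^*(\F_{\infty})\qquad\text{for every }n.
\]
Next, since $\min$ is injective, each connecting map $u_n\ot Id_{C^*(\F_{\infty})}$ is a complete order embedding for the $\min$-structure and hence, by the identification just obtained, also for the $\mathrm{c}$-structure; this verifies the hypothesis that $\{\mS_n\ot_{\mathrm{c}}C^*(\F_{\infty})\}_{n=1}^{\infty}$ is an inductive system, so the $\mathrm{c}$-part of \Cref{c} becomes applicable. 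Applying \Cref{c} to both tensor products, and noting that the two resulting inductive systems have identical objects and identical connecting maps (hence the same limit), I would chain
\[
(\ilim \mS_n)\ot_{\min}C^*(\F_{\infty})=\ilim\big(\mS_n\ot_{\min}C^*(\F_{\infty})\big)=\ilim\big(\mS_n\ot_{\mathrm{c}}C^*(\F_{\infty})\big)=(\ilim \mS_n)\ot_{\mathrm{c}}C^*(\F_{\infty}).
\]
Converting the right-hand side back to $\max$ via the $C^*$-algebra factor then yields $(\ilim \mS_n)\ot_{\min}C^*(\F_{\infty})=(\ilim \mS_n)\ot_{\max}C^*(\F_{\infty})$, so $\ilim \mS_n$ has DCEP and is $(\mathrm{el},\mathrm{c})$-nuclear.

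The main obstacle is the legitimacy of applying \Cref{c} to the $\mathrm{c}$ tensor products: unlike $\min$, the tensor product $\mathrm{c}$ is not injective, so a priori the maps $u_n\ot Id_{C^*(\F_{\infty})}$ need not be complete order embeddings and $\{\mS_n\ot_{\mathrm{c}}C^*(\F_{\infty})\}$ need not form an inductive system. What rescues the argument is exactly the hypothesis that each $\mS_n$ is $(\mathrm{el},\mathrm{c})$-nuclear, which forces the per-level coincidence $\min=\mathrm{c}$ on $\mS_n\ot C^*(\F_{\infty})$ and thereby transports the embedding property from the injective $\min$ tensor product to $\mathrm{c}$. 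The only ingredient beyond \Cref{c} and the stated DCEP criterion is the standard fact that $\mathrm{c}$ and $\max$ coincide when one tensor factor is a unital $C^*$-algebra, and it is this fact that lets the $\max$-based criterion be extracted from the $\mathrm{c}$-based computation.
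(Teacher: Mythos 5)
Your proposal is correct and takes essentially the same route as the paper's proof: reduce DCEP to the single test object $C^*(\F_{\infty})$, apply \Cref{c} to both the $\min$ and the $\mathrm{c}$ tensor products, and pass between $\mathrm{c}$ and $\max$ using that one tensor factor is a unital $C^*$-algebra. The only minor divergence is in verifying that $\{\mS_n \ot_{\mathrm{c}} C^*(\F_{\infty})\}_{n=1}^{\infty}$ forms an inductive system: the paper deduces this from left injectivity of $\mathrm{el}$ combined with $(\mathrm{el},\mathrm{c})$-nuclearity of each $\mS_n$, whereas you transport the embedding property from the injective $\min$ tensor product through the per-level identification $\min=\mathrm{c}$ on tensoring with $C^*(\F_{\infty})$; both justifications are sound.
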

\begin{proof}  Since for each $n$, $\mS_n$ is $(\mathrm{el},\mathrm{c})$-nuclear, we have \begin{equation}
\mS_n \ot_{\max} C^*(\mathbb{F}_{\infty})= \mS_n \ot_{\min}  C^*(\mathbb{F}_{\infty}). \nonumber
\end{equation}
Also, $\mathrm{el}$ is left injective (\cite[Theorem 7.5]{KPTT1}), therefore for each $n$, $u_n \ot Id_{\mT}$ is a complete order isomorphism, so that  $\{\mS_n \ot_{\mathrm{c}} \mT\}_{n=1}^{\infty}$ forms an inductive system with $\{u_n \ot Id_{\mT}\}_{n=1}^{\infty}$ as the connecting maps.
Using this and the fact that maximal commuting tensor product $\mathrm{c}$ coincides with the operator system $\max$ tensor product if one of the tensorial factor is a $C^*$-algebra (\cite[Theorem 6.7]{KPTT1}), we have
 \begin{eqnarray}
(\ilim \mS_n) \ot_{\min} C^*(\mathbb{F}_{\infty}) & =& \ilim (\mS_n \ot_{\min} C^*(\mathbb{F}_{\infty})) \text{  (\Cref{c}}) \nonumber \\
& = & \ilim (\mS_n \ot_{\max} C^*(\mathbb{F}_{\infty}))\nonumber \\
& = & \ilim (\mS_n \ot_{\mathrm{c}} C^*(\mathbb{F}_{\infty}))\nonumber \\
& = & (\ilim \mS_n) \ot_{\mathrm{c}}  C^*(\mathbb{F}_{\infty}) \text{  (\Cref{c}}) \nonumber \\
& = & (\ilim \mS_n) \ot_{\max}  C^*(\mathbb{F}_{\infty}) \nonumber
\end{eqnarray}
Hence, $\ilim \mS_n$ has DCEP provided each $\mS_n$ has DCEP.
\end{proof}

%==================================================%==================================================
%===========================================================

\subsection*{Acknowledgements}
{\small The authors would like to thank Ved Prakash Gupta for his careful reading of the manuscript and helpful comments.}

%==================================================
%==================================================%==================================================
\bibliographystyle{plain}
\bibliography{ind_lim_ref}

\begin{thebibliography}{10}

\bibitem{Arveson}
William~B. Arveson.
\newblock Subalgebras of {C}*-algebras.
\newblock {\em Acta Mathematica}, 123(1):141--224, 1969.

\bibitem{blackadar2}
Bruce Blackadar.
\newblock {\em Operator algebras: {T}heory of ${C^*}$-algebras and von Neumann
  algebras}, volume 122.
\newblock Springer Science \& Business Media, 2006.

\bibitem{brownozawa}
N.~P. Brown and N.~Ozawa.
\newblock {\em {C}*-algebras and finite-dimensional approximations}, volume~88.
\newblock American Mathematical Soc., 2008.

\bibitem{disgrp}
D.~Farenick, A.~S. Kavruk, V.~I. Paulsen, and I.~G. Todorov.
\newblock Operator systems from discrete groups.
\newblock {\em Comm. Math. Phys.}, 329(1):207--238, 2014.

\bibitem{opsysnuc}
V.P. Gupta and P.~Luthra.
\newblock Operator system nuclearity via ${C}^*$-envelopes.
\newblock {\em J. Aust. Math. Soc.}, 101(3):356--375, 2016.

\bibitem{hamana2}
M.~Hamana.
\newblock Injective envelopes of operator systems.
\newblock {\em Publ. Res. Inst. Math. Sci.}, 15(3):773--785, 1979.

\bibitem{kavnuc}
A.~S. Kavruk.
\newblock Nuclearity related properties in operator systems.
\newblock {\em J. Operator Theory}, 71(1):95--156, 2014.

\bibitem{KPTT1}
A.~S. Kavruk, V.~I. Paulsen, I.~G. Todorov, and M.~Tomforde.
\newblock Tensor products of operator systems.
\newblock {\em J. Funct. Anal.}, 261(2):267--299, 2011.

\bibitem{KPTT2}
A.~S. Kavruk, V.~I. Paulsen, I.~G. Todorov, and M.~Tomforde.
\newblock Quotients, exactness, and nuclearity in the operator system category.
\newblock {\em Advances in Math.}, 235:321--360, 2013.

\bibitem{caralgebra}
Eberhard Kirchberg.
\newblock On subalgebras of the {CAR}-algebra.
\newblock {\em Journal of Functional Analysis}, 129(1):35--63, 1995.

\bibitem{kirch}
Eberhard Kirchberg and Simon Wassermann.
\newblock {C}*-algebras generated by operator systems.
\newblock {\em Journal of Functional Analysis}, 155(2):324--351, 1998.

\bibitem{embopsys}
P.~Luthra and A.~Kumar.
\newblock Embeddings and ${C}^*$-envelopes of exact operator systems.
\newblock {\em arXiv preprint arXiv:1603.01491}, 2016.

\bibitem{ortiz}
C.~M. Ortiz and V.~I. Paulsen.
\newblock Lov{\'a}sz theta type norms and operator systems.
\newblock {\em Linear Algebra Appl.}, 477:128--147, 2015.

\bibitem{pisier}
G.~Pisier.
\newblock {\em Introduction to Operator Space Theory}, volume 294 of {\em LNS}.
\newblock London Mathematical Society, Cambridgle University Press, 2003.

\bibitem{rordam}
Mikael R{\o}rdam.
\newblock {\em Classification of nuclear, simple ${C^*}$-algebras}.
\newblock Springer, 2002.

\bibitem{rordamk}
Mikael R{\o}rdam, Flemming Larsen, and Niels Laustsen.
\newblock {\em An Introduction to K-theory for C*-algebras}, volume~49.
\newblock Cambridge University Press, 2000.

\bibitem{zheng}
Da~Zheng.
\newblock The operator system generated by {C}untz isometries.
\newblock {\em arXiv preprint arXiv:1410.6950}, 2014.

\end{thebibliography}
\end{document}